\title[Inverse Fault Friction Problem]{Stable Recovery of Coefficients in an Inverse Fault Friction Problem}
\author[M. V. de Hoop]{Maarten V. de Hoop}
\address{Maarten V. de Hoop: Computational and Applied Mathematics and Earth Science, Rice University, Houston, TX 77005, USA}
\email{mdehoop@rice.edu}
\author[M. Lassas]{Matti Lassas}
\address{Matti Lassas: Department of Mathematics and Statistics, University of Helsinki, FI-00014 Helsinki, Finland}
\email{matti.lassas@helsinki.fi}
\author[J. Lu]{Jinpeng Lu}
\address{Jinpeng Lu: Department of Mathematics and Statistics, University of Helsinki, FI-00014 Helsinki, Finland} 
\email{jinpeng.lu@helsinki.fi}
\author[L. Oksanen]{Lauri Oksanen}
\address{Lauri Oksanen: Department of Mathematics and Statistics, University of Helsinki, FI-00014 Helsinki, Finland} 
\email{lauri.oksanen@helsinki.fi}
\theoremstyle{definition}
\newtheorem{definition}{Definition}[section]
\theoremstyle{plain}
\newtheorem{theorem}{Theorem} 
\newtheorem{lemma}[definition]{Lemma} 
\newtheorem{proposition}[definition]{Proposition}
\newcommand\p{\partial}
\newcommand{\veu}{\boldsymbol{u}}
\newcommand{\vev}{\boldsymbol{v}}
\newcommand{\vew}{\boldsymbol{w}}
\newcommand{\verho}{\boldsymbol{\rho}}
\newcommand{\R}{{\mathbb R}}
\newcommand{\Surf}{{\Sigma}}
\newcommand{\frc}{{\mathrm{f}}}
\newcommand{\mathbi}[1]{{\boldsymbol{#1}}}
\newcommand{\jmp}[1]{{\,[\,#1\,]_{-}^{+} }}
\newcommand{\ven}{\mathbi{n}}
\begin{document}

\maketitle

\vspace{-5mm}
\begin{abstract}
We consider the inverse fault friction problem of determining the friction coefficient in the Tresca friction model, which can be formulated as an inverse problem for differential inequalities. We show that the measurements of elastic waves during a rupture uniquely determine the friction coefficient at the rupture surface with explicit stability estimates.
\end{abstract}

\section{Introduction}

The study of earthquake physics remains highly challenging through its complex dynamics and multifaceted nature. Nearly all aspects of earthquake ruptures are controlled by the friction along a fault, where these commonly occur, that progressively increases with tectonic forcing. Indeed, in a recent Annual Review of Earth and Planetary Sciences, it was stated that ``determining the friction during an earthquake is required to understand when and where earthquakes occur'' (Brodsky \textit{et al}. \cite{B20}). Some common approach has been developed retrieving the stress evolution at each point of the fault as dictated by the slip history obtained from the kinematic inverse rupture problem; we mention work by Ide and Takeo \cite{IT}, who determined the spatiotemporal slip distribution on an assumed fault plane of the 1995 Kobe earthquake by ``waveform inversion'' and then numerically solved the elastodynamic equations to determine the stress distribution and constitutive relations on the fault plane. 
However, seismologists studying earthquake dynamics have reported that both stress and friction on a fault are still poorly known and difficult to constrain with observations (Causse, Dalguer and Mai \cite{CDM}). Here, we address the question whether this is possible, in principle. 

We study the recovery of a time- and space-dependent friction coefficient via the slip rate and normal and tangential stresses, using the Tresca model (see e.g. the book of Sofonea and Matei \cite{SM}), at a pre-existing fault from ``near-surface'' elastic-wave, that is, seismic displacement data. 
This dynamic inverse friction problem can be regarded as an inverse problem for differential inequalities, as the Tresca friction model can be formulated through variational inequalities as seen in many contact mechanics problems (e.g. \cite{DL,SM}). While inverse problems for differential equations have been widely studied, inverse problems for differential inequalities have not yet received much attention.
Our approach is based on the quantitative unique continuation for the elastic wave equation established in our recent work \cite{DLLO}, where we studied the kinematic inverse rupture problem of determining the friction force at the rupture surface from seismic displacement data.
Itou and Kashiwabara \cite{IK} recently analyzed the Tresca model on a fault coupled to the elastic wave equation; we exploit their results in our study of the inverse problem.
We also mention recent work by Hirano and Itou \cite{HI} on deriving an analytical solution to the slip rate distribution of self-similar rupture growth under a distance-weakening friction model.
As a disclaimer, while we address the most fundamental question, we do ignore more complex physics such as thermo-mechanical effects.

We remark that in the past two decades, quite many studies have been devoted to the practical determination of fault frictional properties by analyzing slowly-evolving afterslip following large earthquakes. For very recent results, see Zhao and Yue \cite{ZY}. Afterslip is the fault slip process in response to an instantaneous coseismic stress change, in which the slip velocity decrease corresponds to the stress releasing by itself. Its ``self-driven'' nature provides a framework to model the slip process with the fault frictional properties alone. For a review, see Yue et al. \cite{YZ}. Afterslip is analyzed with quasi-static deformation, that is, with the elastostatic system of equations, typically using geodetic data.

\medskip
Let $M\subset \mathbb{R}^3$ be a bounded connected open set with smooth boundary, modelling the solid Earth. Let $\overline{\Sigma_{{\rm f}}}\subset M$ be a connected orientable embedded smooth surface with nonempty smooth boundary satisfying $\overline{\Sigma_{{\rm f}}}\cap \partial M=\emptyset$, modelling the rupture surface.
Consider the elastic wave equation
\begin{equation} \label{eq-elastic-special}
\rho \partial_t^2 \veu -\mu \Delta \veu -(\lambda+\mu)\nabla (\nabla\cdot \veu)=0 \quad \textrm{in } \big(M \setminus \overline{\Surf_\frc}\big) \times (-T,T) \, ,
\end{equation}
with the Tresca friction condition (e.g. \cite{DL,IK,KI22}) on the rupture surface $\Sigma_{{\rm f}}$:
\begin{equation} \label{Tresca}
\left\{ \begin{aligned}
&\sigma_n=F_n \, \textrm{ is given}, \\
&\jmp{\sigma_n}=0,\quad \jmp{\boldsymbol{\sigma}_{\tau}}=0, \\
&|\boldsymbol{\sigma}_{\tau}|<g \Longrightarrow \jmp{\p_t \veu_{\tau}}=0,\\
&|\boldsymbol{\sigma}_{\tau}|=g \Longrightarrow \jmp{\p_t \veu_{\tau}}\cdot \boldsymbol{\sigma}_{\tau}=g\,\big| \jmp{\p_t \veu_{\tau}} \big| \, .
\end{aligned} \right.
\end{equation}
Here $\sigma_n=(\boldsymbol{\sigma}(\veu) \ven)\cdot \ven$ and $\boldsymbol{\sigma}_{\tau}=\boldsymbol{\sigma}(\veu) \ven-\sigma_n \ven$ are the normal and tangential components of the stress tensor $\boldsymbol{\sigma}(\veu)$, where $\ven$ is the unit normal vector of the rupture surface $\Sigma_{{\rm f}}$.
The stress tensor $\boldsymbol{\sigma}(\veu)$ is defined as
\begin{equation} \label{def-stress}
\boldsymbol{\sigma}(\veu)=2\mu \boldsymbol{\varepsilon}(\veu)+\lambda \big({\rm tr} \,\boldsymbol{\varepsilon}(\veu) \big) \boldsymbol{I}, \quad\;
\boldsymbol{\varepsilon}(\veu)=\frac12 \Big(\nabla \veu+(\nabla \veu)^T \Big).
\end{equation}
The notation $\jmp{\cdot}$ stands for the jump across the rupture surface, more precisely,
\begin{align} \label{tangential-jump}
\jmp{\p_t \veu_{\tau}} := \p_t (\veu_{\tau}^{+} - \veu_{\tau}^{-}),
\quad 
\veu_{\tau}^{\pm}:= \lim_{h\to 0^{\pm}} \veu_{\tau} (z+h\ven,t),\quad z\in \Sigma_{{\rm f}},
\end{align}
where $\veu_{\tau}$ is the tangential component of $\veu$, with respect to the unit normal vector field $\ven$ smoothly extended to a neighborhood of $\Sigma_{{\rm f}}$.
The friction force $g$ at $\Sigma_{{\rm f}}$ is of the form
\begin{equation}\label{friction-coeff}
g=\mathscr{F}|F_n|,
\end{equation}
where $\mathscr{F}>0$ is the friction coefficient.
Note that the friction force $g$ and the friction coefficient $\mathscr{F}$ may depend on time.

Regarding the direct problem for the Tresca friction model above, the weak formulation is understood in the variational sense (see e.g. \cite{IK,DL,SM}).
Let $V_0=\{\vev\in H^1(M \setminus \overline{\Surf_\frc}): \vev=0 \textrm{ on }\partial M\}$.
Recall that with the Dirichlet condition on the boundary $\partial M$, 
the problem of finding $\veu$ satisfying the Tresca friction model (\ref{eq-elastic-special}-\ref{Tresca}) is formulated as finding
$\veu$ such that for all $\vev\in H^1(V_0)$ and all $t\in (0,T)$, the following variational inequality holds:
\begin{eqnarray} \label{eq-variational}
&&\rho \int_{M\setminus \overline{\Surf_\frc}} (\vev-\partial_t \veu)\partial_t^2 \veu \, dV +a\big(\veu,\vev-\partial_t \veu \big) \\
&&+\int_{\Surf_\frc} g \cdot \big|\jmp{\vev_{\tau}}\big| \, dA -\int_{\Surf_\frc} g \cdot \big|\jmp{\partial_t \veu_{\tau}}\big| \,dA
+\int_{\Surf_\frc} F_n \cdot \jmp{v_n} \, dA \geq 0, \nonumber
\end{eqnarray}
where $dV,\,dA$ stand for the volume and area element of $M$ and $\Surf_\frc$, respectively, and $a$ is a bilinear symmetric form defined by
$a(\vev,\vew):=\int_{M\setminus \overline{\Surf_\frc}} \textrm{tr}\big(\boldsymbol{\sigma}(\vev)^T 
\boldsymbol{\varepsilon}(\vew)\big) \, dV$.
If one assumes
$$\veu_0:=\veu(\cdot,0)\in H^1(	V_0), \quad \partial_t\veu (\cdot,0)\in H^1(V_0),$$
$$F_n\in H^2(0,T; L^2(\Surf_\frc)), \quad g\in  H^2(0,T; L^2(\Surf_\frc)),$$
and the compatibility conditions at $t=0$:
\begin{equation*}
\left\{ \begin{aligned} &\textrm{div}\, \boldsymbol{\sigma} (\veu_0) \in L^2(M \setminus \overline{\Surf_\frc}), \\
&\sigma_n(\veu_0^{+})=\sigma_n(\veu_0^{-})=F_n(\cdot,0) \quad &\textrm{ on } \Surf_\frc, \\
&\boldsymbol{\sigma}_{\tau}(\veu_0^{+})=\boldsymbol{\sigma}_{\tau}(\veu_0^{-})=0,\;\; \partial_t \veu_{\tau}(\cdot,0)=0 \quad &\textrm{ on } \Surf_\frc ,
\end{aligned} \right.
\end{equation*}
then the friction problem above has a unique solution \cite{IK},
\begin{equation} \label{regularity-direct}
\veu\in W^{1,\infty} \big(0,T;H^1(V_0) \big)\cap W^{2,\infty}\big(0,T; L^2(M \setminus \overline{\Surf_\frc}) \big).
\end{equation}

\smallskip
In this paper, we consider the inverse problem of determining the friction coefficient $\mathscr{F}$ in the Tresca friction model above.
For the sake of presentation, we consider the elastic wave equation \eqref{eq-elastic-special} on the time interval $(-T,T)$.
Let $M\subset \mathbb{R}^3$ be a bounded connected open set with smooth boundary, modelling the solid Earth. Let $\overline{\Sigma_{{\rm f}}}\subset M$ be a connected orientable embedded smooth surface with nonempty smooth boundary, modelling the rupture surface.
Denote the \emph{a priori} bound for the norm of the elastic wave $\veu$ in the space \eqref{regularity-direct} by
\begin{equation} \label{u-norm}
\|\veu\|_{W^{1,\infty}(-T,T;H^1(M \setminus \overline{\Surf_\frc}))} +\|\veu\|_{W^{2,\infty}(-T,T;L^2(M \setminus \overline{\Surf_\frc}))} \leq \Lambda_0.
\end{equation}
We impose the following assumption on the regularity of the normal stress $F_n$ and the friction coefficient $\mathscr{F}$.
Assume that $F_n,\mathcal{F}\in C^{0,1}(\Sigma_{{\rm f}}\times [-T,T])$, and
\begin{equation} \label{apriori-fault}
|F_n| \geq c_0>0,\quad \|F_n\|_{C^{0,1}(\Sigma_{{\rm f}}\times [-T,T])}\leq C_0, \quad \|\mathscr{F}\|_{C^{0,1}(\Sigma_{{\rm f}}\times [-T,T])}\leq C_0.
\end{equation}
In addition, assume that the parameters $\rho,\lambda,\mu$ in the elastic wave equation \eqref{eq-elastic-special} to be smooth and time-independent on $M \setminus \overline{\Surf_\frc}$.

\smallskip
We prove the following result on the inverse fault friction problem during a rupture.

\begin{theorem}\label{main}
Let $M\subset \mathbb{R}^3$ be a bounded connected open set with smooth boundary. Let $\overline{\Sigma_{{\rm f}}}\subset M$ be a connected orientable embedded smooth surface with nonempty smooth boundary, satisfying $\overline{\Sigma_{{\rm f}}}\cap \partial M=\emptyset$.
Consider the elastic wave equation \eqref{eq-elastic-special} with smooth time-independent parameters and the Tresca friction condition \eqref{Tresca}.
Assume the normal stress $F_n$ and the friction coefficient $\mathscr{F}$ satisfy \eqref{apriori-fault} on $\Sigma_{{\rm f}}$. 
Assume $\big| \jmp{\p_t \veu_{\tau}} \big| \geq c_1$ on $\Sigma_{{\rm f}}\times [-T,T]$ for some constant $c_1>0$ and a priori norm \eqref{u-norm} for the elastic wave $\veu$.
Suppose that we are given the elastic wave $\veu$ on an interior open set $U$ satisfying $\overline{U}\subset M\setminus \overline{\Sigma_{{\rm f}}}$ up to sufficiently large time $T$.
Then we have the following conclusions.

\begin{enumerate}[(1)]

\item The friction coefficient $\mathscr{F}$ on $\Sigma_{{\rm f}}\times [-\frac{T}{2},\frac{T}{2}]$ is uniquely determined by $\veu$ on $U\times [-T,T]$.

\smallskip
\item Suppose that we have two systems with friction coefficients $\mathscr{F}_1,\mathscr{F}_2$, and we are given the corresponding elastic waves $\veu_1,\veu_2$ on $U\times [-T,T]$.
Then there exist constants $\widehat{\varepsilon_0},C,c>0$ such that for any $0<\varepsilon_0<\widehat{\varepsilon_0}$,
if
$$\|\veu_1-\veu_2\|_{H^2(U\times [-T,T])}\leq \varepsilon_0,$$
then the friction coefficients satisfy
$$\|\mathscr{F}_1-\mathscr{F}_2\|_{L^2(\Sigma_{{\rm f}}\times [-\frac{T}{2},\frac{T}{2}])} \leq C(\log|\log\varepsilon_0|)^{-c},$$
where the constants $\widehat{\varepsilon_0}, C$ depend on $c_0,C_0,\Lambda_0,c_1,T$, parameters of the elastic wave equation and $M,U,\Sigma_{{\rm f}}$, and $c$ is an absolute constant.
\end{enumerate}
\end{theorem}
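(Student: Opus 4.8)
The plan is to use the Tresca slip law to express $\mathscr{F}$ algebraically in terms of the conormal stress $\boldsymbol{\sigma}(\veu)\ven$ on the fault, and then to recover that stress from $\veu|_U$ by invoking the quantitative unique continuation for the elastic wave equation from \cite{DLLO}.

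First I would localize the friction regime. By assumption $\big|\jmp{\p_t\veu_{\tau}}\big|\ge c_1>0$ on $\Sigma_{{\rm f}}\times[-T,T]$; a point where $|\boldsymbol{\sigma}_{\tau}|<g$ would satisfy $\jmp{\p_t\veu_{\tau}}=0$ by the third line of \eqref{Tresca}, which is impossible. Hence every point of $\Sigma_{{\rm f}}\times[-T,T]$ lies in the sliding regime, and the fourth line of \eqref{Tresca} (with the Cauchy--Schwarz inequality, together with the Tresca constraint $|\boldsymbol{\sigma}_{\tau}|\le g$ inherent to the model) forces $|\boldsymbol{\sigma}_{\tau}|=g=\mathscr{F}|F_n|$ there. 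Since $|F_n|\ge c_0>0$ by \eqref{apriori-fault}, this gives
\[
\mathscr{F}=\frac{\big|\boldsymbol{\sigma}_{\tau}(\veu)\big|}{\big|\sigma_n(\veu)\big|}\qquad\text{on }\Sigma_{{\rm f}}\times[-T,T],
\]
where $\sigma_n(\veu)$ and $\boldsymbol{\sigma}_{\tau}(\veu)$ denote the normal and tangential parts of $\boldsymbol{\sigma}(\veu)\ven|_{\Sigma_{{\rm f}}}$; by the first two lines of \eqref{Tresca} these traces are well defined, continuous across $\Sigma_{{\rm f}}$, and $\sigma_n(\veu)=F_n$. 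Thus it suffices to recover the conormal stress of $\veu$ on the fault, stably in $L^2$.

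Next I would invoke unique continuation. The set $M\setminus\overline{\Sigma_{{\rm f}}}$ is open and connected (a smooth surface-with-boundary not meeting $\partial M$ has been removed), and on it $\veu$ solves \eqref{eq-elastic-special} with smooth time-independent coefficients, while $\veu$ is known on $U\times(-T,T)$. Choosing $T$ large enough that the domains of influence of $U$, propagating at the slower elastic speed $\sqrt{\mu/\rho}$, reach all of $\overline{\Sigma_{{\rm f}}}$ within time $T/2$ (this is the ``sufficiently large $T$'' of the hypothesis), the quantitative unique continuation of \cite{DLLO} determines $\boldsymbol{\sigma}(\veu)\ven$ on $\Sigma_{{\rm f}}\times[-\tfrac{T}{2},\tfrac{T}{2}]$ --- as a limit from either side, the two limits agreeing by the second line of \eqref{Tresca} --- and for two solutions with $\|\veu_1-\veu_2\|_{H^2(U\times(-T,T))}\le\varepsilon_0<\widehat{\varepsilon_0}$ it yields
\[
\big\|\sigma_n(\veu_1)-\sigma_n(\veu_2)\big\|_{L^2(\Sigma_{{\rm f}}\times[-\frac{T}{2},\frac{T}{2}])}+\big\|\boldsymbol{\sigma}_{\tau}(\veu_1)-\boldsymbol{\sigma}_{\tau}(\veu_2)\big\|_{L^2(\Sigma_{{\rm f}}\times[-\frac{T}{2},\frac{T}{2}])}\le C\big(\log|\log\varepsilon_0|\big)^{-c}.
\]
I would remark that the conormal trace is first controlled in a weak, negative-order norm via the $H(\mathrm{div})$ normal-trace map applied to the interior estimate, and then interpolated against the a priori bounds \eqref{u-norm} and \eqref{apriori-fault} (the latter giving $|\boldsymbol{\sigma}_{\tau}(\veu_i)|=g_i\le C_0^2$) to upgrade it to $L^2$, the interpolation only affecting $C$ and $c$. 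Setting $\varepsilon_0=0$ gives conclusion (1).

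Finally I would combine. With $F_{n,i}=\sigma_n(\veu_i)$ and $g_i=|\boldsymbol{\sigma}_{\tau}(\veu_i)|$, so that $\mathscr{F}_i=g_i/|F_{n,i}|$ with $|F_{n,i}|\ge c_0$ and $g_i\le C_0^2$ on $\Sigma_{{\rm f}}\times[-\tfrac{T}{2},\tfrac{T}{2}]$, the reverse triangle inequality gives pointwise there
\[
|\mathscr{F}_1-\mathscr{F}_2|\le\frac{1}{c_0}\big|\boldsymbol{\sigma}_{\tau}(\veu_1)-\boldsymbol{\sigma}_{\tau}(\veu_2)\big|+\frac{C_0^2}{c_0^2}\big|\sigma_n(\veu_1)-\sigma_n(\veu_2)\big|,
\]
and taking $L^2(\Sigma_{{\rm f}}\times[-\tfrac{T}{2},\tfrac{T}{2}])$ norms and inserting the previous estimate yields conclusion (2). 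The hard part will be the unique continuation step: proving the double-logarithmic stability for the isotropic elastic system with the slit $\overline{\Sigma_{{\rm f}}}$ removed, with the sharp geometric time threshold, and passing from interior estimates to $L^2$ control of the conormal stress on $\Sigma_{{\rm f}}$ under the limited regularity \eqref{regularity-direct} --- all of which is precisely what \cite{DLLO} supplies, so that the remainder of the argument is essentially bookkeeping.
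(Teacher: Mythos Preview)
Your overall strategy matches the paper's, and the final algebraic estimate is correct. The genuine gap is the interpolation step where you upgrade the negative-order trace bound on $\boldsymbol{\sigma}_\tau^{(1)}-\boldsymbol{\sigma}_\tau^{(2)}$ to $L^2$ by ``interpolating against'' the $L^\infty$ bound $|\boldsymbol{\sigma}_\tau^{(i)}|=g_i\le C_0^2$. No such interpolation exists: take $f_n(x)=\sin nx$ on a bounded interval, so $\|f_n\|_{H^{-s}}\to 0$ and $\|f_n\|_{L^\infty}=1$, yet $\|f_n\|_{L^2}$ does not tend to $0$. To interpolate up to $L^2$ you need the \emph{vector} $\boldsymbol{\sigma}_\tau^{(i)}$ in a positive-order Sobolev space on the fault, and knowing only that its modulus is Lipschitz (or that the a~priori norm \eqref{u-norm} holds, which gives $\boldsymbol{\sigma}(\veu)$ merely in $L^2$ in space with no trace) does not provide this.

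The paper closes this gap by proving $\boldsymbol{\sigma}_\tau^{(i)}\in H^{1/2}(\Sigma_{{\rm f}}\times[-T,T])$: it uses the slip-direction identity $\boldsymbol{\sigma}_\tau=g\,\jmp{\p_t\veu_\tau}/\big|\jmp{\p_t\veu_\tau}\big|$, a dedicated lemma (Lemma~\ref{lemma-H1}) showing the composition operator $\boldsymbol f\mapsto\boldsymbol f/\max(|\boldsymbol f|,c_1)$ is bounded and \emph{continuous} on $H^1$, and an extension/trace argument to place the unit slip direction in $H^{1/2}$; multiplication by the Lipschitz $g$ then gives $\boldsymbol{\sigma}_\tau\in H^{1/2}$, after which genuine Sobolev interpolation with the $H^{-1/2-\alpha}$ smallness from Lemma~\ref{lemma-trace} yields the $L^2$ bound. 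Your assertion that \cite{DLLO} already supplies the passage to $L^2$ stress-trace control under the regularity \eqref{regularity-direct} is also off: Section~\ref{sec-regularity} of the present paper is devoted precisely to closing that regularity gap, and the trace estimate itself (Lemma~\ref{lemma-trace}) requires a partial-hypoellipticity argument on a smooth thickening $N=M\setminus D$ of the slit domain $M\setminus\overline{\Sigma_{{\rm f}}}$, neither of which is in \cite{DLLO}.
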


We remark that one could also formulate the result assuming a lower bound for $\big| \jmp{\p_t \veu_{\tau}} \big|$ on a subset of $\Sigma_{{\rm f}}$ and then recovers the friction coefficient $\mathscr{F}$ in the same subset.
Theoretically speaking, our method can also work with measurements of the elastic waves on an open subset of the boundary $\partial M$ if additional regularity of the elastic waves on $\partial M$ can be assumed, see \cite[Remark 1]{DLLO} or \cite[Theorem 3]{BILL}; however, this additional boundary regularity required is not provided by the regularity class \eqref{regularity-direct}.

\smallskip
The proof of Theorem \ref{main} is divided into two parts: the measurements of $\veu$ on $U$ determine $\veu$ near $\Sigma_{{\rm f}}$, and the latter determines $\mathscr{F}$ under the Tresca friction condition.
The first part, also known as the kinematic inverse rupture problem \cite{FSG}, has mostly been done in our recent work \cite{DLLO}. 
Our method was based on the quantitative unique continuation for the elasticity system, motivated by \cite{Tataru1,EINT}.
However, regularity issues remain: the actual regularity of waves is not enough for the quantitative unique continuation arguments to work on the whole domain, which is addressed in Section \ref{sec-regularity}. The second part is discussed in Section \ref{sec-proof}.

\medskip
\noindent {\bf Acknowledgement.} The authors thank Erik Burman for pointing out the reference on the regularity of the direct problem, and thank the anonymous referee for pointing out a mistake in an earlier version of the paper.
MVdH was supported by the Simons Foundation under the MATH + X program, the National Science Foundation under grant DMS-2108175, and the corporate members of the Geo-Mathematical Imaging Group at Rice University.
M.L. and J.L. were supported by the PDE-Inverse project of the European Research Council of the European Union and the Research Council of Finland, grants 273979 and 284715. 
L.O. was supported by the European Research Council of the European Union, grant 101086697 (LoCal), and the Research Council of Finland, grants 347715 and 353096. Views and opinions expressed are those of the authors only and do not necessarily reflect those of the European Union or the other funding organizations.

\section{Interior regularity} \label{sec-regularity}

Let $N \subset \R^3$ be a bounded connected open set with smooth boundary. Let $\delta>0$ be small, and denote the interior by
\begin{equation} \label{def-Mdelta}
N_{\delta}:=\{x\in N: d(x,\partial N)\geq \delta\}.
\end{equation}
Let $\veu$ be an elastic wave satisfying the elastic wave equation \eqref{eq-elastic-special} on the whole set $N$ (without the presence of a rupture surface).
Recall \cite[Lemma 5.1]{EINT} that the elastic wave equation can be decomposed into a system of hyperbolic equations for $(\veu,\textrm{div}\, \veu, \textrm{curl}\, \veu)$.
Let $U\subset N$ be a connected open subset, and we choose $\delta$ sufficiently small such that $\overline{U}\subset N_{\delta}$.

\begin{figure}[h]
  \begin{center}
    \includegraphics[width=0.42\linewidth]{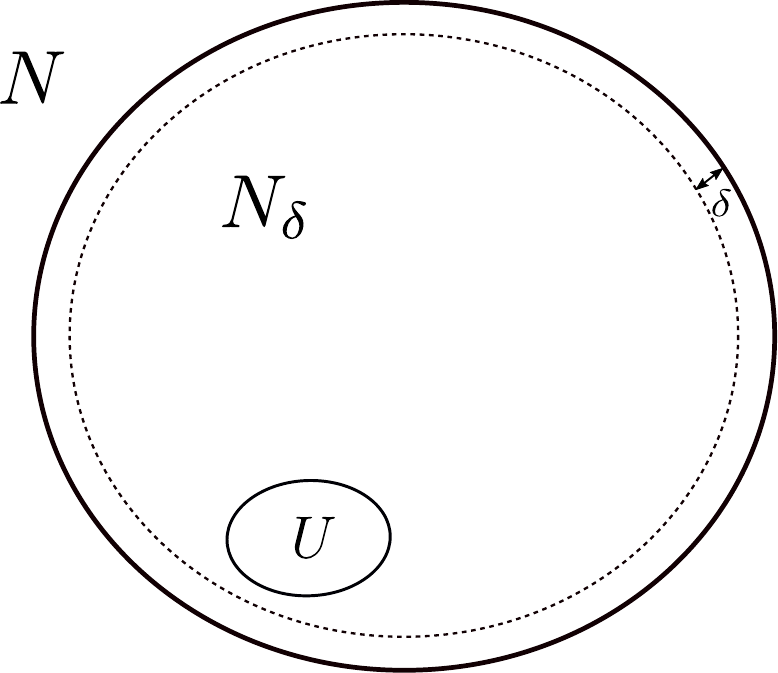}
    \caption{Setting of Lemma \ref{lemma-interior-regularity} and Proposition \ref{uc-M}.}
    \label{fig_Mdelta}
  \end{center}
\end{figure}

We apply Proposition 3.2 in \cite{DLLO} to $(\veu,\textrm{div}\, \veu, \textrm{curl}\, \veu)$ in $N_{\delta}$: for sufficiently small $h$ and sufficiently large $T$ (specified in \cite[Proposition 3.2]{DLLO}), if 
$$\|\veu\|_{H^2(U\times [-T,T])} \leq \varepsilon_0,$$
then
\begin{equation} \label{uc-delta}
\|\veu\|_{L^2(N_{\delta}\times [-\frac{T}{2},\frac{T}{2}])} \leq C\exp(h^{-cn}) \frac{\Lambda_{\delta}}{\Big( \log\big(1+h\frac{\Lambda_{\delta}}{\varepsilon_0}\big)\Big)^{\frac12}}+C\|\veu\|_{H^1(N_{\delta}\times [-T,T])}\, h^{\frac{1}{n+1}},
\end{equation}
where $n=\textrm{dim}(N)=3$, and
\begin{equation}
\Lambda_{\delta}:=\big\|(\veu,\textrm{div}\, \veu, \textrm{curl}\, \veu) \big\|_{H^1(N_{\delta}\times [-T,T])}.
\end{equation}

Observe that $\Lambda_{\delta}$ essentially asks for $H^2$ regularity, while the solution of the original direct problem \eqref{regularity-direct} is only in $H^1$ in space. One way to resolve the issue is to use interior regularity estimate, and then use Sobolev embedding to get an immediate estimate for the boundary layer.

We assume that the elastic wave $\veu$ in $N$ is in the following energy class,
\begin{equation} \label{u-norm-N}
\|\veu\|_{W^{1,\infty}(-T,T;H^1(N))} +\|\veu\|_{W^{2,\infty}(-T,T;L^2(N))} \leq \Lambda_0.
\end{equation}

\begin{lemma}\label{lemma-interior-regularity}
The following $H^2$ interior regularity estimate holds for the elastic wave equation \eqref{eq-elastic-special} on $N\times [-T,T]$:
$$\|\veu\|_{H^2(N_{2\delta}\times [-T,T])} 
\leq C_{T} C_{M} \delta^{-4} \Big( \|\veu\|_{W^{1,\infty}(-T,T;H^1(N))} +\|\veu\|_{W^{2,\infty}(-T,T;L^2(N))} \Big),$$
which gives a bound
\begin{equation}\label{norm-H2}
\Lambda_{\delta}\leq C_T C_N \delta^{-4}\Lambda_0,
\end{equation}
where $C_T$ depends only on $T$, and $C_N$ depends only on geometric parameters of $N$.
\end{lemma}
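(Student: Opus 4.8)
The plan is to obtain the $H^2$ interior estimate by a standard elliptic-regularity-plus-difference-quotient argument applied to the elliptic operator $\mathcal{L}\veu := \mu\Delta\veu + (\lambda+\mu)\nabla(\nabla\cdot\veu)$, using the wave equation to trade the time derivatives for spatial information. Write the equation as $\mathcal{L}\veu = \rho\,\partial_t^2\veu =: \mathbi{f}$. From the energy class \eqref{u-norm-N} we have $\mathbi{f}\in L^\infty(-T,T;L^2(N))$ with norm controlled by $\Lambda_0$, and $\veu\in L^\infty(-T,T;H^1(N))$. First I would fix a smooth cutoff $\chi$ with $\chi=1$ on $N_{2\delta}$, $\supp\chi\subset N_\delta$, and $|\nabla^k\chi|\lesssim \delta^{-k}$, and for a.e.\ fixed $t$ apply the interior $H^2$ estimate for the Lam\'e operator (which is strongly elliptic since $\mu>0$ and $2\mu+\lambda>0$, both bounded above/below on $\overline N$ by smoothness and, if needed, the a priori constants): $\|\veu(\cdot,t)\|_{H^2(N_{2\delta})}\le C_N\delta^{-2}\big(\|\mathbi{f}(\cdot,t)\|_{L^2(N_\delta)}+\delta^{-?}\|\veu(\cdot,t)\|_{H^1(N_\delta)}\big)$. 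This is the usual Caccioppoli/difference-quotient argument localized by $\chi$; carefully tracking how many powers of $\delta$ the cutoff derivatives produce is exactly where the exponent in $\delta^{-4}$ comes from, and I would be somewhat generous with that exponent since the statement only claims $\delta^{-4}$.

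Next I would handle the time derivatives. Differentiating the wave equation in $t$ gives $\mathcal{L}(\partial_t\veu) = \rho\,\partial_t^3\veu$; but \eqref{u-norm-N} only controls $\partial_t^2\veu$ in $L^2$, not $\partial_t^3\veu$, so I cannot directly bootstrap $\partial_t\veu$ in $H^2$. Instead, to bound the mixed and pure time derivatives appearing in $\|\veu\|_{H^2(N_{2\delta}\times[-T,T])}$ — namely $\partial_t\nabla\veu$ and $\partial_t^2\veu$ — I would note that $\partial_t^2\veu$ is already in $L^2(N\times[-T,T])$ by \eqref{u-norm-N}, and $\partial_t\nabla\veu$ is in $L^2$ as well since $\partial_t\veu\in L^\infty(-T,T;H^1(N))$. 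The only genuinely new information needed is the second \emph{spatial} derivatives $\nabla^2\veu$, supplied by the elliptic estimate above, integrated in $t$ over $[-T,T]$ (contributing the factor $C_T\sim\sqrt{2T}$ from passing $L^\infty_t$ to $L^2_t$). Summing the three contributions gives the claimed bound, and specializing $2\delta\leftrightarrow\delta$ and relabeling constants yields \eqref{norm-H2}, since $\Lambda_\delta=\|(\veu,\operatorname{div}\veu,\operatorname{curl}\veu)\|_{H^1(N_\delta\times[-T,T])}\le C\|\veu\|_{H^2(N_{2\delta}\times[-T,T])}$ after renaming $\delta$ (or absorbing the loss into the constant $C_N$).

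The main obstacle is bookkeeping rather than conceptual: making the dependence on $\delta$ explicit and uniform. The interior elliptic estimate is classically stated with an unspecified constant depending on the subdomain, so I would need to redo the localized Caccioppoli estimate with the scaled cutoff $\chi$ and count powers of $\delta^{-1}$ at each integration by parts; a clean way is to rescale to unit scale on each ball of radius $\sim\delta$, apply the fixed-scale estimate, and rescale back, which transparently produces the negative powers of $\delta$. One must also make sure the ellipticity constants of the Lam\'e operator are uniform over $N$ — this follows from smoothness of $\rho,\lambda,\mu$ on $\overline N$ together with the standard strong-ellipticity (Legendre–Hadamard and, in fact, coercivity) conditions $\mu>0$, $3\lambda+2\mu>0$, which hold for physical elastic parameters — and that no boundary terms from $\partial N$ enter, guaranteed because $\supp\chi\subset N_\delta\Subset N$. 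Everything else is routine, so I would present the elliptic step as a lemma invocation and only spell out the $\delta$-scaling.
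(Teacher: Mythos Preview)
Your argument is correct but follows a genuinely different route from the paper. The paper does \emph{not} apply elliptic regularity to the Lam\'e operator $\mathcal{L}$ directly; instead it first decomposes the elastic wave equation into the system \eqref{eq-system} for $(\veu,v,\vew)=(\veu,\operatorname{div}\veu,\operatorname{curl}\veu)$, whose principal parts are scalar Laplacians, and then bootstraps in two passes: one application of scalar interior regularity (with cutoff cost $\delta^{-2}$) to the $v$-equation lifts $v$ from $L^2$ to $H^1$ on $N_\delta$, and a second application to the $\veu$-equation on $N_\delta$ (using the improved $v$) lifts $\veu$ to $H^2$ on $N_{2\delta}$, giving the total factor $\delta^{-4}$. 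Your single-step approach via system ellipticity is more direct and, if carried out carefully, would in fact yield the sharper power $\delta^{-2}$; the paper's two-step scheme is what forces $\delta^{-4}$. The trade-off is that the paper's route stays entirely within scalar elliptic regularity and reuses the very decomposition \eqref{eq-system} that underlies the unique continuation estimate \eqref{uc-delta}, whereas your route needs the (standard but system-level) interior $H^2$ theory for Legendre--Hadamard elliptic systems. One minor remark: for that theory the relevant ellipticity condition is $\mu>0$, $\lambda+2\mu>0$ (Legendre--Hadamard), not the stronger $3\lambda+2\mu>0$ you mention, though both hold for physical parameters.
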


\begin{proof}
Suppose that $\varphi$ is a weak solution of the elliptic equation on $N$,
$$\Delta \varphi=f,\quad \varphi\in H^1(N),\; f\in L^2(N).$$
Let $\chi$ be a cut-off function satisfying $\chi|_{N_{\delta}}=1$, $\chi|_{\partial N}=0$ and $\|\chi\|_{C^2(N)}\leq C \delta^{-2}$.
Then $$\varphi_0:=\chi \varphi$$ satisfies
\begin{equation*}
\left\{ \begin{aligned} &\Delta \varphi_0 =\chi \Delta \varphi +[\Delta,\chi]\varphi=:\widetilde{f}, \\
&\varphi_0|_{\partial N}=0,
\end{aligned} \right.
\end{equation*}
where 
$$\|\widetilde{f}\|_{L^2(N)}\leq C\delta^{-2} \Big(\|\varphi\|_{H^1(N)}+\|f\|_{L^2(N)} \Big).$$
Hence $\varphi_0\in H^2(N)$ by boundary regularity for elliptic equations (e.g. \cite[Theorem 4 in Chapter 6.3]{Evans}). Then it follows that
$\varphi|_{N_{\delta}}\in H^2(N_{\delta})$, and
\begin{equation} \label{interior-regularity}
\|\varphi\|_{H^2(N_{\delta})}\leq\|\varphi_0\|_{H^2(N)}\leq C_N \delta^{-2}\Big(\|\varphi\|_{H^1(N)}+\|f\|_{L^2(N)} \Big).
\end{equation}
The constant $C_N$ depends on the first Dirichlet eigenvalue on $N$ which is uniformly bounded below by diameter and curvature bounds (e.g. \cite[Theorem 8]{LY}).
The same argument is valid for $\varphi\in L^2(N)$ and $f\in H^{-1}(N)$ with constant $C_N\delta^{-2}$.

\smallskip
Now we switch to the notations in our first paper \cite{DLLO},
\begin{equation} \label{def-uvw}
(\veu,v,\vew):=(\veu, \textrm{div}\, \veu, \textrm{curl}\, \veu).
\end{equation}
Recall that the elastic wave equation \eqref{eq-elastic-special} can be decomposed into the following system (\cite[Lemma 5.1]{EINT} or \cite[Lemma A.1]{DLLO}):
\begin{equation} \label{eq-system}
\left\{ \begin{aligned}
&\frac{\rho}{\mu}\partial_t^2 \veu -\Delta \veu+A_1(\veu, v)=0, \\
&\frac{\rho}{2\mu+\lambda}\partial_t^2 v -\Delta v+A_2(\veu, v, \vew)=0, \\
&\frac{\rho}{\mu}\partial_t^2 \vew -\Delta \vew+A_3(\veu, v, \vew)=0,
\end{aligned} \right.
\end{equation}
where $A_i$ are first order and has no time derivative. 

Then consider the second equation. Since $\veu\in W^{1,\infty}(-T,T;H^1(N))$ from \eqref{u-norm-N}, then
$$v,\vew\in W^{1,\infty}(-T,T;L^2(N)), \quad A_2(\veu,v,\vew)\in W^{1,\infty}(-T,T;H^{-1}(N)).$$ 
On the other hand, since $\veu\in W^{2,\infty}(-T,T; L^2(N))$ from \eqref{u-norm-N}, then
$$v\in W^{2,\infty}(-T,T;H^{-1}(N)), \quad \partial_t^2 v\in L^{\infty}(-T,T;H^{-1}(N)).$$ 
Then using the second equation gives 
$$\Delta v\in L^{\infty}(-T,T;H^{-1}(N)).$$
Thus from the interior regularity argument \eqref{interior-regularity} (for $L^2$-$H^{-1}$), we have $$v|_{N_{\delta}}\in L^{\infty}(-T,T;H^1(N_{\delta}))$$ with constant $C_N \delta^{-2}$ in the regularity estimate.

Next consider the first equation on $N_{\delta}$. Using the improved interior regularity for $v|_{N_{\delta}}$, we see that $A_1(\veu,v)|_{N_{\delta}}\in L^{\infty}(-T,T;L^2(N_{\delta}))$. This gives 
$$\Delta \veu|_{N_{\delta}}\in L^{\infty}(-T,T;L^2(N_{\delta})),$$ considering that $\partial_t^2 \veu \in L^{\infty}(-T,T;L^2(N))$.
Thus by shrinking the domain by another $\delta$ and using \eqref{interior-regularity}, we obtain $$\veu|_{N_{2\delta}}\in L^{\infty}(-T,T;H^2(N_{2\delta}))$$ with constant $C_N \delta^{-4}$ in the regularity estimate, i.e.,
\begin{equation} \label{interior-space}
\|\veu\|_{L^{\infty}(-T,T;H^2(N_{2\delta}))} \leq C_N \delta^{-4} \Big( \|\veu\|_{W^{1,\infty}(-T,T;H^1(N))} +\|\veu\|_{W^{2,\infty}(-T,T;L^2(N))} \Big).
\end{equation}
Thus from \eqref{interior-space} and \eqref{u-norm-N}, we obtain the estimate
\begin{eqnarray*}
\|\veu\|_{H^2(N_{2\delta}\times [-T,T])} &\leq& C_T \Big( \|\veu\|_{L^{\infty}(-T,T;H^2(N_{2\delta}))} + \|\veu\|_{W^{1,\infty}(-T,T;H^1(N))} +\|\veu\|_{W^{2,\infty}(-T,T;L^2(N))} \Big) \\
&\leq& C_{T} C_{N} \delta^{-4} \Big( \|\veu\|_{W^{1,\infty}(-T,T;H^1(N))} +\|\veu\|_{W^{2,\infty}(-T,T;L^2(N))} \Big).
\end{eqnarray*}
\end{proof}

\begin{proposition} \label{uc-M}
Let $N\subset \R^3$ be a bounded connected open set with smooth boundary and $U\subset N$ be a connected open set. Let $\veu$ be a solution of the elastic wave equation \eqref{eq-elastic-special} in $N\times [-T,T]$ with a priori norm \eqref{u-norm-N}. 
Then there exist constants $\widehat{\varepsilon_0},C,c>0$ such that for any $0<\varepsilon_0<\widehat{\varepsilon_0}$, if
$$\|\veu\|_{H^2(U\times [-T,T])}\leq \varepsilon_0,$$
then
$$\|\veu\|_{L^2(N\times [-\frac{T}{2},\frac{T}{2}])}\leq C(\log|\log \varepsilon_0|)^{-c},$$
where the constants $\widehat{\varepsilon_0},C$ depend on $\Lambda_0,T$, parameters of the elastic wave equation and geometric parameters of $N$, and $c$ is an absolute constant. 
\end{proposition}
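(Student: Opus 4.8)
The plan is to split $N=N_\delta\cup(N\setminus N_\delta)$, bound the interior piece $N_\delta$ by the quantitative unique continuation estimate \eqref{uc-delta} fed with the interior regularity bound \eqref{norm-H2}, bound the thin collar $N\setminus N_\delta$ using only the $H^1$ a priori bound \eqref{u-norm-N}, and then optimize the two free parameters $h$ and $\delta$ (both of which will tend to $0$ as $\varepsilon_0\to0$, so in particular $\overline U\subset N_\delta$ and $h$ is below the smallness threshold underlying \eqref{uc-delta} once $\varepsilon_0$ is small).

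For the collar I would fix $t$: since $\veu(\cdot,t)\in H^1(N)$ with norm at most $\Lambda_0$ by \eqref{u-norm-N}, expanding $\veu$ along inward normals issued from $\partial N$ via the fundamental theorem of calculus and using the trace inequality $\|\veu(\cdot,t)\|_{L^2(\partial N)}\le C_N\|\veu(\cdot,t)\|_{H^1(N)}$ gives the standard collar bound $\|\veu(\cdot,t)\|_{L^2(N\setminus N_\delta)}\le C_N\,\delta^{1/2}\,\|\veu(\cdot,t)\|_{H^1(N)}$ for all small $\delta$, hence after integrating in $t$,
\begin{equation*}
\|\veu\|_{L^2((N\setminus N_\delta)\times[-\tfrac T2,\tfrac T2])}\le C_N\sqrt{T}\,\delta^{1/2}\,\Lambda_0 .
\end{equation*}
For the interior I would substitute \eqref{norm-H2}, i.e. $\Lambda_\delta\le C_TC_N\delta^{-4}\Lambda_0=:K\delta^{-4}$ (and $\|\veu\|_{H^1(N_\delta\times[-T,T])}\le\Lambda_\delta$), into \eqref{uc-delta}; adding the collar bound and writing $n=3$ gives
\begin{equation*}
\|\veu\|_{L^2(N\times[-\tfrac T2,\tfrac T2])}\le C\exp(h^{-cn})\,\frac{K\delta^{-4}}{\big(\log(1+hK\delta^{-4}/\varepsilon_0)\big)^{1/2}}+CK\delta^{-4}h^{\frac1{n+1}}+C\sqrt{T}\,\delta^{1/2}\Lambda_0 .
\end{equation*}

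The remaining work is the choice of $h$ and $\delta$. First I would set $\delta=h^{1/18}$, so that (with $n=3$, hence $\tfrac1{n+1}=\tfrac14$) the last two ``error'' terms both become the positive power $h^{1/36}$ of $h$ up to constants depending on $K,\Lambda_0,T$, while $\delta^{-4}=h^{-2/9}$ is only a small negative power of $h$ and $hK\delta^{-4}=Kh^{7/9}$. Then I would set $h$ by $h^{-cn}=\tfrac14\log|\log\varepsilon_0|$, i.e. $h\sim(\log|\log\varepsilon_0|)^{-1/(cn)}$. With this choice $\exp(h^{-cn})=|\log\varepsilon_0|^{1/4}$, and since $h$ — hence $h^{7/9}$ — tends to $0$ far more slowly than any power of $\varepsilon_0$, one has $\log(1+Kh^{7/9}/\varepsilon_0)\ge\tfrac14|\log\varepsilon_0|$ for $\varepsilon_0$ small; thus the first term is $\le CKh^{-2/9}|\log\varepsilon_0|^{-1/4}$, a negative power of $|\log\varepsilon_0|$ times a slowly growing power of $\log|\log\varepsilon_0|$, hence $\le C(\log|\log\varepsilon_0|)^{-1}$ for $\varepsilon_0$ small, while the error terms are $\le C(\log|\log\varepsilon_0|)^{-1/(36cn)}$. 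Taking $c$ in the statement to be $\min\{1,\tfrac1{36cn}\}$ — an absolute constant because the exponent $cn$ in \eqref{uc-delta} is — yields the asserted bound, with $\widehat{\varepsilon_0}$ and $C$ absorbing the dependence on $\Lambda_0,T$, the elastic parameters, and the geometry of $N$.

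The one substantive point is precisely this three-parameter balancing: \eqref{uc-delta} is available only on the interior $N_\delta$, and \eqref{norm-H2} shows it costs a factor $\delta^{-4}$ in the a priori norm, so $\delta$ must be a power of $h$ small enough that the $\delta^{-4}$ loss is absorbed by the $h^{1/(n+1)}$ gain, yet large enough in that same power that the collar term $\delta^{1/2}$ still decays, and $h$ must in turn be coupled to $\varepsilon_0$ through a double logarithm so that the exponential factor $\exp(h^{-cn})$ does not overwhelm the $(\log)^{1/2}$ denominator in \eqref{uc-delta}. The collar estimate, the substitution of the regularity bound, and the triangle-inequality split are all routine.
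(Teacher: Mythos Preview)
Your proof is correct and follows the paper's strategy: split $N=N_\delta\cup(N\setminus N_\delta)$, apply \eqref{uc-delta} with the regularity bound \eqref{norm-H2} on $N_\delta$, estimate the collar separately, and balance $h,\delta,\varepsilon_0$. Two minor technical differences are worth noting. First, for the collar the paper uses the Sobolev embedding $H^1(N\times[-T,T])\hookrightarrow L^{2(n+1)/(n-1)}$ in space--time followed by H\"older against the collar volume, obtaining $C\Lambda_0\,\delta^{1/(n+1)}$; your trace-plus-integration-along-normals argument is more elementary and actually gives the sharper power $\delta^{1/2}$. Second, in the middle term of \eqref{uc-delta} the paper bounds $\|\veu\|_{H^1(N_\delta\times[-T,T])}$ directly by $C_T\Lambda_0$ from the a~priori estimate \eqref{u-norm-N}, avoiding the extra factor $\delta^{-4}$ you pick up by routing through $\Lambda_\delta$; this lets them simply take $\delta=h$ rather than your $\delta=h^{1/18}$. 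Neither difference affects the final $(\log|\log\varepsilon_0|)^{-c}$ form of the estimate, only the value of the absolute constant $c$.
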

\begin{proof}
We have already estimated the $L^2$-norm of $\veu$ on $N_{\delta}\times [-T/2,T/2]$ from \eqref{uc-delta} and Lemma \ref{lemma-interior-regularity}.
The $L^2$-norm on the remaining part follows from the Sobolev embedding theorem.
Namely, consider
$$\mathcal{N}_{\delta}^T :=\{x\in N: d(x,\partial N)\leq \delta\} \times [-\frac{T}{2},\frac{T}{2}].$$
Apply the Sobolev embedding theorem (e.g. Theorem 4.12 in \cite{AF}) to the space $N\times [-T,T]$ of dimension $n+1=4$, which satisfies the uniform cone condition (Definition 4.8 in \cite{AF}),
$$\|\veu\|_{L^{\frac{2(n+1)}{n-1}}(N\times [-T,T])} \leq C\|\veu\|_{H^1(N\times [-T,T])}\leq C\Lambda_0.$$
Then,
$$\|\veu\|_{L^2(\mathcal{N}_{\delta}^T)} \leq \|\veu\|_{L^{\frac{2(n+1)}{n-1}}(N\times [-T,T])} \big(\textrm{Vol}(\mathcal{N}_{\delta}^T) \big)^{\frac{1}{n+1}} \leq C \Lambda_0 \delta^{\frac{1}{n+1}}.$$
Hence, we have
\begin{eqnarray} \label{L2-N}
\|\veu\|_{L^2(N\times [-\frac{T}{2},\frac{T}{2}])} 
&\leq& \|\veu\|_{L^2(N_{\delta}\times [-\frac{T}{2},\frac{T}{2}])}+ \|\veu\|_{L^2((N\setminus N_{\delta})\times [-\frac{T}{2},\frac{T}{2}])} \nonumber \\
&\leq& C\exp(h^{-cn}) \frac{\delta^{-4}\Lambda_0}{\Big( \log\big(1+h\frac{\delta^{-4}\Lambda_0}{\varepsilon_0}\big)\Big)^{\frac12}}+C\Lambda_{0} h^{\frac{1}{n+1}}+C\Lambda_{0} \delta^{\frac{1}{n+1}}.
\end{eqnarray}

From here, we need to balance the parameters $\delta,h,\varepsilon_0$.
Choose $\delta=h$ such that the three terms on the right-hand side of \eqref{L2-N} are equal, and we get
\begin{equation}
\delta=h=C \big(\log |\log \varepsilon_0| \big)^{-c},
\end{equation}
for some constant $c$ depending only on $n=3$, and for some constant $C$ independent of $h$.
The condition $h<h_0$ gives the choice for $\widehat{\varepsilon_0}$:
\begin{equation}
\widehat{\varepsilon_0}=\Big( \exp \exp \big(C^{-1}h_0^{-1/c} \big) \Big)^{-1}.
\end{equation}
Inserting the choice of $\delta,h$ back into \eqref{L2-N} gives the desired form of the estimate.
\end{proof}

\section{Inverse friction problem} \label{sec-proof}

Using Proposition \ref{uc-M}, we consider the inverse problem of determining the friction coefficient $\mathscr{F}$ in the Tresca friction model.
As Proposition \ref{uc-M} is formulated without the presence of the rupture surface $\Sigma_{{\rm f}}$, we apply the result to a smooth manifold whose boundary extends $\Sigma_{{\rm f}}$. Of course the choice of such a manifold can be arbitrary and in this paper we use a type as illustrated in Figure \ref{fig_N}.

\begin{figure}[h]
  \begin{center}
    \includegraphics[width=0.42\linewidth]{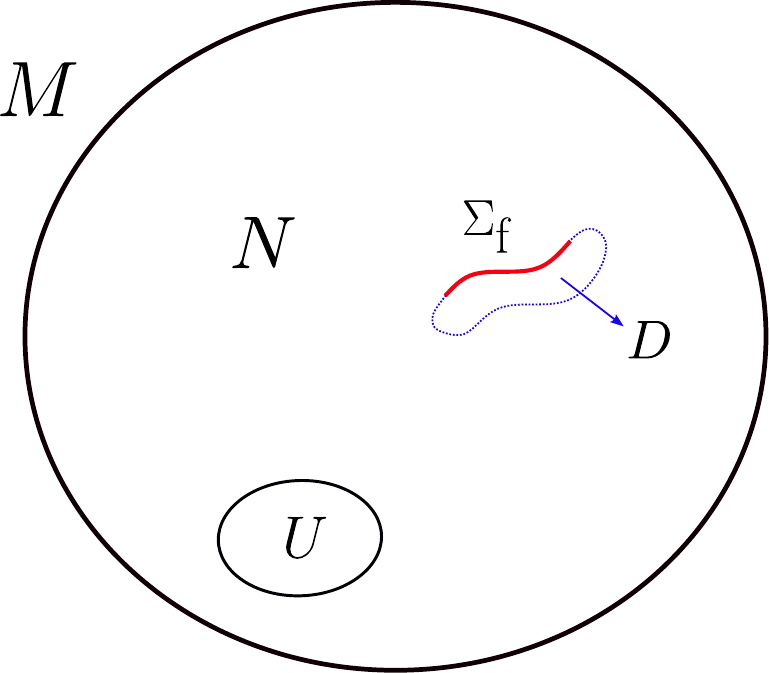}
    \caption{Applying the quantitative unique continuation to the connected open set $N=M\setminus D$ with smooth boundary, where $D$ is a compact smooth manifold whose boundary extends $\Sigma_{{\rm f}}$.}
    \label{fig_N}
  \end{center}
\end{figure}

\begin{lemma} \label{lemma-trace}
Let $M\subset \mathbb{R}^3$ be a bounded connected open set with smooth boundary, $\overline{\Sigma_{{\rm f}}}$ be a connected orientable smooth embedded compact rupture surface with nonempty smooth boundary satisfying $\overline{\Sigma_{{\rm f}}}\cap \partial M=\emptyset$, and $U\subset M$ be a connected open set satisfying $\overline{U}\subset M\setminus \overline{\Sigma_{{\rm f}}}$.  
Let $\veu$ be a solution of the elastic wave equation \eqref{eq-elastic-special} with a priori norm \eqref{u-norm}. 
If for sufficiently small $\varepsilon_0$,
$$\|\veu\|_{H^2(U\times [-T,T])} \leq \varepsilon_0,$$
then for any $\alpha\in (0,\frac12)$, we have
$$\big\|\boldsymbol{\sigma}(\veu)|_{\Sigma_{{\rm f}}} \big\|_{H^{-\frac12-\alpha}(\Sigma_{{\rm f}}\times [-\frac{T}{2},\frac{T}{2}])}\leq C\Lambda_0^{1-\alpha}\varepsilon_1^{\alpha},$$
where $\varepsilon_1:=C(\log|\log\varepsilon_0|)^{-c}$. The same estimate also holds for the components $\sigma_n,\boldsymbol{\sigma}_{\tau}$. The constant $C$ depends on $\Lambda_0,T$, parameters of the elastic wave equation and $M, U, \Sigma_{{\rm f}}$, and $c$ is an absolute constant. 

\end{lemma}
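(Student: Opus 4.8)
The plan is to transfer the $L^2$-bound for $\veu$ on $N\times[-\frac T2,\frac T2]$ provided by Proposition \ref{uc-M} into a bound for the stress tensor $\boldsymbol{\sigma}(\veu)$ restricted to the rupture surface $\Sigma_{{\rm f}}$, paying the price of a loss of derivatives through the trace map and of an interpolation exponent $\alpha$ because we have two-sided control on $\veu$: an $L^2$-smallness estimate (small quantity $\varepsilon_1$) and an $H^2$-type a priori bound ($\Lambda_0$). First I would set up the manifold $N=M\setminus D$ as in Figure \ref{fig_N}, where $D$ is a compact smooth manifold whose boundary contains $\Sigma_{{\rm f}}$ in its interior, so that $\Sigma_{{\rm f}}$ sits in the interior of $\partial N$ and $\overline U\subset N$. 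Since $N$ has smooth boundary and $\veu$ restricted to $N$ satisfies the elastic wave equation with a priori norm \eqref{u-norm-N} inherited from \eqref{u-norm}, Proposition \ref{uc-M} applies and yields
$$\|\veu\|_{L^2(N\times[-\frac T2,\frac T2])}\leq C(\log|\log\varepsilon_0|)^{-c}=:\varepsilon_1.$$

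Next I would invoke interior elliptic/hyperbolic regularity away from the measurement set. By Lemma \ref{lemma-interior-regularity} applied on $N$ (or directly on the larger domain), $\veu$ lies in $H^2$ on a fixed interior subdomain containing a one-sided neighborhood of $\Sigma_{{\rm f}}$, with norm controlled by $\Lambda_0$ (the power of $\delta$ is irrelevant here since $\delta$ is now a fixed geometric quantity, not a parameter to be optimized). Hence on such a fixed collar $W$ of $\Sigma_{{\rm f}}$ on the $N$-side we have simultaneously $\|\veu\|_{L^2(W\times[-\frac T2,\frac T2])}\leq\varepsilon_1$ and $\|\veu\|_{H^2(W\times[-\frac T2,\frac T2])}\leq C\Lambda_0$. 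Interpolating between $L^2$ and $H^2$ (Sobolev interpolation, e.g. by real or complex interpolation, or simply by the Gagliardo--Nirenberg inequality) gives for each $s\in(0,2)$
$$\|\veu\|_{H^{s}(W\times[-\frac T2,\frac T2])}\leq C\Lambda_0^{1-\theta}\varepsilon_1^{\theta},\qquad \theta=1-\tfrac s2.$$
Then $\boldsymbol{\sigma}(\veu)=2\mu\boldsymbol{\varepsilon}(\veu)+\lambda(\operatorname{tr}\boldsymbol{\varepsilon}(\veu))\boldsymbol I$ involves one spatial derivative of $\veu$, so $\boldsymbol{\sigma}(\veu)\in H^{s-1}(W\times[-\frac T2,\frac T2])$ with the same bound (the smooth parameters $\mu,\lambda$ only contribute a constant). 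Taking $s=\frac32-\alpha$ with $\alpha\in(0,\frac12)$ makes $s-1=\frac12-\alpha>0$, so the trace of $\boldsymbol{\sigma}(\veu)$ onto the hypersurface $\Sigma_{{\rm f}}\times[-\frac T2,\frac T2]$ is well defined and, by the trace theorem on $H^{s-1}$ (losing $\frac12$ a derivative),
$$\big\|\boldsymbol{\sigma}(\veu)|_{\Sigma_{{\rm f}}}\big\|_{H^{-\frac12-\alpha}(\Sigma_{{\rm f}}\times[-\frac T2,\frac T2])}\leq C\big\|\boldsymbol{\sigma}(\veu)\big\|_{H^{\frac12-\alpha}(W\times[-\frac T2,\frac T2])}\leq C\Lambda_0^{1-\alpha}\varepsilon_1^{\alpha},$$
where $\theta=1-\frac s2=\frac14+\frac\alpha2$; to match the stated exponent one simply relabels, or more cleanly interpolates $\boldsymbol{\sigma}(\veu)$ directly between $H^{-1}$ (controlled by $\varepsilon_1$, since $\veu\in L^2$) and $H^1$ (controlled by $\Lambda_0$, since $\veu\in H^2$) with weight $\alpha$, giving $\boldsymbol{\sigma}(\veu)\in H^{2\alpha-1}(W)$ with bound $C\Lambda_0^{1-\alpha}\varepsilon_1^\alpha$ and then trace onto $\Sigma_{{\rm f}}$. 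The statement for the components $\sigma_n$ and $\boldsymbol{\sigma}_\tau$ follows since, near $\Sigma_{{\rm f}}$, the unit normal field $\ven$ is smooth, so $\sigma_n=(\boldsymbol{\sigma}\ven)\cdot\ven$ and $\boldsymbol{\sigma}_\tau=\boldsymbol{\sigma}\ven-\sigma_n\ven$ are obtained from $\boldsymbol{\sigma}$ by multiplication with smooth functions, which is bounded on the relevant Sobolev spaces.

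The main obstacle is the regularity bookkeeping: the solution $\veu$ is genuinely only $H^1$ in space globally (class \eqref{regularity-direct}), so the trace of the first-order object $\boldsymbol{\sigma}(\veu)$ onto a codimension-one surface does not exist in any classical sense, and one must go below $H^{-1/2}$ on $\Sigma_{{\rm f}}$ — this is exactly why the negative index $-\frac12-\alpha$ appears and why $\alpha$ cannot reach $\frac12$. Crucially, the interior regularity of Lemma \ref{lemma-interior-regularity} is what rescues the argument: it upgrades $\veu$ to $H^2$ on a neighborhood of $\Sigma_{{\rm f}}$ that stays away from $\partial N=\partial M\cup\partial D$, so the interpolation and trace steps take place in a region where $\veu$ is genuinely $H^2$. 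One should take care that the collar $W$ and the geometric constants depend only on $M,U,\Sigma_{{\rm f}}$ and not on $\varepsilon_0$, and that the exponent $c$ is inherited verbatim from Proposition \ref{uc-M}, hence an absolute constant.
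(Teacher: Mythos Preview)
Your argument has a genuine gap at the step where you invoke Lemma~\ref{lemma-interior-regularity} to obtain $H^2$ control on a collar $W$ of $\Sigma_{{\rm f}}$. By construction $\Sigma_{{\rm f}}\subset\partial D\subset\partial N$, so any one-sided collar of $\Sigma_{{\rm f}}$ in $N$ necessarily meets $\partial N$; there is no neighborhood of $\Sigma_{{\rm f}}$ that ``stays away from $\partial N=\partial M\cup\partial D$'', contrary to what you write in the last paragraph. Lemma~\ref{lemma-interior-regularity} only delivers $H^2$ regularity on $N_{2\delta}$, i.e.\ at a fixed positive distance from $\partial N$, and hence at a fixed positive distance from $\Sigma_{{\rm f}}$. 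You therefore cannot interpolate between $L^2$ and $H^2$ on a set that touches $\Sigma_{{\rm f}}$, and the subsequent trace step collapses: with only $\veu\in H^{1-\alpha}$ near $\Sigma_{{\rm f}}$ you have $\boldsymbol{\sigma}(\veu)\in H^{-\alpha}$, which does not admit a trace. (Nor can you repair this by working in $M\setminus\overline{\Sigma_{{\rm f}}}$: the solution jumps across $\Sigma_{{\rm f}}$, so interior regularity in $M$ is unavailable there.)

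The paper's proof confronts exactly this difficulty and resolves it not by interior regularity but by \emph{partial hypoellipticity}. After interpolating only between $L^2$ and the a~priori $H^1$ bound to get $\veu\in H^{1-\alpha}(N\times[-T/2,T/2])$, it passes to the system \eqref{eq-system} for $(\veu,v,\vew)=(\veu,\operatorname{div}\veu,\operatorname{curl}\veu)$ and works in H\"ormander's anisotropic spaces $H_{(k,s)}$ in boundary normal coordinates near $\Sigma_{{\rm f}}$. Using \cite[Theorem~B.2.9]{HIII} twice (once for $v$, then for $\veu$) trades tangential regularity for normal regularity, yielding $(\chi_k\veu)\circ\Phi_k^{-1}\in H_{(2-\alpha,-1)}$ with the bound $C\Lambda_0^{1-\alpha}\varepsilon_1^{\alpha}$. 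The trace theorem for these anisotropic spaces (\cite[Theorem~B.2.7]{HIII}) then gives $\nabla\veu|_{\Sigma_{{\rm f}}}\in H^{-\frac12-\alpha}$ for $\alpha\in(0,\tfrac12)$. This machinery is precisely what substitutes for the boundary $H^2$ regularity you are missing.
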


\begin{proof}
First, we construct a connected open set $N\subset M$ with smooth boundary such that $U\subset N$ and $\Sigma_{{\rm f}}\subset \partial N$.
Let $\textbf{n}$ be a smooth unit normal vector field on $\Sigma_{{\rm f}}$, and consider the ray $z+r \textbf{n}$, $r\geq 0$, from a point $z\in \Sigma_{{\rm f}}$. Since $\overline{\Sigma_{{\rm f}}}$ is smoothly embedded in $\mathbb{R}^3$, there exists sufficiently small $r_0>0$ such that the nearest point of $z+r\textbf{n}$ in $\Sigma_{{\rm f}}$ is $z$ for all $r\in [0,r_0]$ and all $z\in \Sigma_{{\rm f}}$. 
This gives a smooth embedding $f: \Sigma_{{\rm f}}\times [0,r_0] \to \mathbb{R}^3$ such that $f(\Sigma_{{\rm f}}\times \{0\})=\Sigma_{{\rm f}}$. 
Note that $r_0$ is chosen such that the closure of the image of $f$ does not intersect with $\partial M$ or $\overline{U}$.
The boundary of the image $f(\Sigma_{{\rm f}}\times [0,r_0])$ extends $\Sigma_{{\rm f}}$, and can be smoothened over a neighborhood of $\partial \Sigma_{{\rm f}}$ without intersecting $\partial M$ or $\overline{U}$.
This construct a compact smooth manifolds $D\subset M$ with smooth boundary satisfying $\Sigma_{{\rm f}}\subset \partial D$, see Figure \ref{fig_N}. 
Note that the complement $M\setminus D$ is connected due to $\partial \Sigma_{{\rm f}}\neq \emptyset$.
Moreover, the manifold $D$ can be constructed such that the curvature tensor of $\partial D$ and its covariant derivatives are bounded depending on $r_0$, $d(\overline{\Sigma_{{\rm f}}},\partial M)$, $d(\overline{\Sigma_{{\rm f}}},\overline{U})$ and geometric parameters of $\Sigma_{{\rm f}}$.
Hence Proposition \ref{uc-M} is applicable to the connected open set $N:=M\setminus D$ with smooth boundary $\partial N=\partial M\cup \partial D$.

\smallskip
Applying Proposition \ref{uc-M} to $N=M\setminus D$ gives $\|\veu\|_{L^2(N \times [-\frac{T}{2},\frac{T}{2}])} \leq \varepsilon_1$. We have a priori norm $\|\veu\|_{H^1(N\times [-T,T])} \leq \Lambda_0$ by \eqref{u-norm}. By interpolation, we have
\begin{equation} \label{u-norm-small}
\|\veu\|_{H^{1-\alpha}(N\times [-\frac{T}{2},\frac{T}{2}])} \leq \Lambda_0^{1-\alpha} \varepsilon_1^{\alpha}, \quad\,\; \forall\,\alpha\in (0,1).
\end{equation}
Recall the system \eqref{eq-system} for $(\veu,v,\vew):=(\veu, \textrm{div}\, \veu, \textrm{curl}\, \veu)$.
Then 
\begin{equation} \label{regularity-vw}
\|v\|_{H^{-\alpha}(N\times [-\frac{T}{2},\frac{T}{2}])}+\|\vew\|_{H^{-\alpha}(N\times [-\frac{T}{2},\frac{T}{2}])} \leq 2\Lambda_0^{1-\alpha} \varepsilon_1^{\alpha}.
\end{equation}

To proceed further, we recall the $H_{(k,s)}$-norm in $\R^{n+1}$ (see \cite[Definition B.1.10]{HIII}) defined as
\begin{equation} \label{def-Hks}
\|u\|^2_{(k,s)} = \int_{\R^{n+1}} |\widehat u(\xi)|^2 (1+|\xi|^2)^k (1+|\xi'|^2)^s d\xi \, ,
\end{equation}
with respect to the coordinates $y=(x',x^n)\in \R^{n}\times \R$. 
Note that when $s=0$, the $H_{(k,0)}$-norm above is equivalent to the usual $H^k$-norm.
The idea is using partial hypoellipticity (e.g. \cite[Appendix B]{HIII} or \cite[Chapter 26.1]{E}) to trade regularity between normal and tangential components.

Since the $H_{(k,s)}$-norm was defined above for functions on $\mathbb{R}^{n+1}$, we apply the technique to local coordinates and patch up using partition of unity in the standard way.
Let $\{B_k\}_{k=1}^N$ be a finite open cover of a small neighborhood of $\Sigma_{{\rm f}}$ in $\overline{N}$, and let $\{\chi_k\}_{k=1}^N$ be a partition of unity subordinate to the open cover. Setting the diameter of each open set $B_k$ smaller than the injectivity radius of $\overline{N}$, one can work in the boundary normal coordinate $(y^1,y^2,y^3)\in \mathbb{R}^3_{y^1\geq 0}$ of $\overline{N}$, where $y^1=d(y,\partial N)$ is the coordinate normal to $\partial N$.
Let $\Phi_k:B_k \to \mathbb{R}^3_{y^1\geq 0}$ be the smooth (boundary normal) coordinate function on each $B_k$ such that $\Phi_k$ maps $\Sigma_{{\rm f}}\cap B_k$ to an open set of $\{(y^1,y^2,y^3)\in \mathbb{R}^3: y^1=0\}$.
Using the notation \eqref{def-Hks}, we write $v,\vew\in H_{(-\alpha,0)}$ from \eqref{regularity-vw} and thus $P_2 v=-A_2(\veu,v,\vew)\in H_{(-1-\alpha,0)}$, where $P_2=\frac{\rho}{2\mu+\lambda}\partial_t^2 -\Delta$ by the second equation in \eqref{eq-system}. 
We denote by $P_{2,k}$ the push-forward of the operator $P_2$ to $\Phi_k(B_k)$, that is, for any smooth function $\hat{f}$ on $\Phi_k(B_k)$,
\begin{equation}
P_{2,k} \hat{f}:= \big(P_2 (\hat{f} \circ \Phi_k) \big)\circ \Phi_k^{-1}.
\end{equation}
In the coordinate $\boldsymbol{y}=(y^1,y^2,y^3)$ of $\Phi_k(B_k)\subset \mathbb{R}^3_{y^1\geq 0}$, the operator $P_{2,k}$ has the standard form $P_{2,k}=\frac{\rho}{2\mu+\lambda}\partial_t^2 - \partial_{y^1}^2-a(\boldsymbol{y}, \partial_{y^2},\partial_{y^3})+\textrm{first order terms}$.
Then we apply \cite[Theorem B.2.9]{HIII} to $(\chi_k v)\circ \Phi_k^{-1}$ (which is a function in the half-plane supported in $\Phi_k(B_k)$),
\begin{eqnarray*}
\|(\chi_k v)\circ \Phi_k^{-1}\|_{(1-\alpha,-1)}&\leq& C\Big(\|(\chi_k v)\circ \Phi_k^{-1}\|_{(-\alpha,0)}+\|P_{2,k} \big((\chi_k v)\circ \Phi_k^{-1} \big)\|_{(-1-\alpha,0)} \Big) \\
&\leq& C\Big(\|(\chi_k v)\circ \Phi_k^{-1}\|_{(-\alpha,0)}+\|(\chi_k P_2 v) \circ \Phi_k^{-1}\|_{(-1-\alpha,0)} \\
&& + \| [P_2,\chi_k] v \circ \Phi_k^{-1}\|_{(-1-\alpha,0)}  \Big) \\
&\leq& C\Big(\|v\|_{(-\alpha,0)}+\|A_2(\veu,v,\vew)\|_{(-1-\alpha,0)} \Big) \\
&\leq& C\Lambda_0^{1-\alpha} \varepsilon_1^{\alpha},
\end{eqnarray*}
where the constants $C$ depend on the $	C^2$-norms of $\Phi_k, \Phi_k^{-1},\chi_k$, which, in turn, depend on the geometry of $\Sigma_{{\rm f}}$. Note that the commutator $[P_2,\chi_k]v$ in the above is first order in $v$.

Let $P_1=\frac{\rho}{\mu}\partial_t^2 -\Delta$ in the first equation $P_1\veu = -A_1(\veu,v)$ in \eqref{eq-system}, and denote by $P_{1,k}$ the push-forward of the operator $P_1$ to $\Phi_k(B_k)$.
Now using $(\chi_k\veu)\circ \Phi_k^{-1} \in H_{(1-\alpha,0)}\subset H_{(1-\alpha,-1)}$ and the regularity $(\chi_k v)\circ \Phi_k^{-1}\in H_{(1-\alpha,-1)}$ just obtained above, we have 
\begin{eqnarray*}
\|\big(\chi_k A_1(\veu,v)\big) \circ \Phi_k^{-1} \|_{(-\alpha,-1)} &\leq& C\Big( \|\nabla \big((\chi_k \veu)\circ \Phi_k^{-1}\big) \|_{(-\alpha,-1)} + \|\nabla \big((\chi_k v)\circ \Phi_k^{-1}\big) \|_{(-\alpha,-1)} \\
&& + \| (\chi_k \veu)\circ \Phi_k^{-1} \|_{(-\alpha,-1)} + \|(\chi_k v)\circ \Phi_k^{-1} \|_{(-\alpha,-1)} \Big) \\
&\leq & C\Big( \| (\chi_k \veu)\circ \Phi_k^{-1} \|_{(1-\alpha,-1)} + \|(\chi_k v)\circ \Phi_k^{-1} \|_{(1-\alpha,-1)} \Big) \\
&\leq & C\Big( \| \veu \|_{(1-\alpha,0)} + \|(\chi_k v)\circ \Phi_k^{-1} \|_{(1-\alpha,-1)} \Big) \\
&\leq& C\Lambda_0^{1-\alpha} \varepsilon_1^{\alpha},
\end{eqnarray*}
where the constants $C$ depend on the $	C^1$-norms of $\Phi_k, \Phi_k^{-1},\chi_k$. Note that we have used the fact that $A_1$ is a first-order linear operator in $\veu,v$.
Using \cite[Theorem B.2.9]{HIII} again to $(\chi_k \veu)\circ \Phi_k^{-1}$, we have
\begin{eqnarray*}
\|(\chi_k \veu)\circ\Phi_k^{-1}\|_{(2-\alpha,-1)}&\leq& C\Big(\|(\chi_k \veu)\circ\Phi_k^{-1}\|_{(1-\alpha,0)}+\|P_{1,k} \big( (\chi_k \veu)\circ \Phi_k^{-1} \big)\|_{(-\alpha,-1)} \Big) \\
&\leq& C\Big(\|\veu\|_{(1-\alpha,0)}+\|(\chi_k A_1(\veu,v))\circ \Phi_k^{-1}\|_{(-\alpha,-1)} +\| [P_1,\chi_k]\veu \circ \Phi_k^{-1}\|_{(-\alpha,-1)} \Big) \\
&\leq& C\Big(\|\veu\|_{(1-\alpha,0)}+\|\veu\|_{(1-\alpha,-1)}+\|(\chi_k A_1(\veu,v))\circ \Phi_k^{-1}\|_{(-\alpha,-1)} \Big) \\
&\leq& C \Lambda_0^{1-\alpha} \varepsilon_1^{\alpha}.
\end{eqnarray*}
This gives $\nabla \big((\chi_k \veu)\circ\Phi_k^{-1}\big) \in H_{(1-\alpha,-1)}$ and
$$\|\nabla \big((\chi_k \veu)\circ\Phi_k^{-1}\big)\|_{(1-\alpha,-1)}\leq C\Lambda_0^{1-\alpha}\varepsilon_1^{\alpha}.$$

Choosing $0<\alpha<1/2$, the trace of $\nabla \big((\chi_k \veu)\circ\Phi_k^{-1}\big)$ on $\Phi_k(B_k\cap \Sigma_{{\rm f}}) \times [-T/2,T/2]$ is well-defined in $H^{-\frac12-\alpha}$ by \cite[Theorem B.2.7]{HIII} with norm estimate,
$$\big\|\nabla \big((\chi_k \veu)\circ\Phi_k^{-1}\big)|_{\Phi_k(B_k\cap \Sigma_{{\rm f}})}\big\|_{H^{-\frac12-\alpha}}\leq C_{\alpha} \|\nabla \big((\chi_k \veu)\circ\Phi_k^{-1}\big)\|_{(1-\alpha,-1)} \leq C\Lambda_0^{1-\alpha}\varepsilon_1^{\alpha}.$$
Changing back to the original Euclidean coordinate, we have $\nabla (\chi_k \veu)|_{B_k\cap \Sigma_{{\rm f}}} \in H^{-\frac12-\alpha}$ on $(B_k\cap \Sigma_{{\rm f}})\times [-T/2,T/2]$. Since $\veu\in H^{1-\alpha} (N\times [-T/2,T/2])$, then $\veu |_{\Sigma_{{\rm f}}}\in L^2(\Sigma_{{\rm f}}\times [-T/2,T/2])$ if $\alpha<1/2$, and thus 
$$\chi_k \nabla \veu |_{B_k\cap \Sigma_{{\rm f}}} \in H^{-\frac12 -\alpha} \big((B_k\cap \Sigma_{{\rm f}})\times [-\frac{T}{2},\frac{T}{2}] \big),$$
with the same norm estimate as above.
Hence,
\begin{eqnarray*}
\|\nabla \veu |_{\Sigma_{{\rm f}}}\|_{H^{-\frac12 -\alpha} (\Sigma_{{\rm f}}\times [-\frac{T}{2},\frac{T}{2}])} &=& \Big\| \sum_{k=1}^N \chi_k \nabla \veu \big|_{\Sigma_{{\rm f}}} \Big\|_{H^{-\frac12 -\alpha} (\Sigma_{{\rm f}}\times [-\frac{T}{2},\frac{T}{2}])} \\
&\leq& N \max_k \big \|\chi_k \nabla \veu |_{B_k\cap \Sigma_{{\rm f}}} \big\|_{H^{-\frac12 -\alpha} (\Sigma_{{\rm f}}\times [-\frac{T}{2},\frac{T}{2}])} \\
&\leq& CN\Lambda_0^{1-\alpha}\varepsilon_1^{\alpha}.
\end{eqnarray*}
Then the regularity for the stress tensor $\boldsymbol{\sigma}(\veu)$ on $\Sigma_{{\rm f}}$ immediately follows from the definition \eqref{def-stress}.
Contracting the stress tensor $\boldsymbol{\sigma}(\veu)$ gives the same estimate for the components $\sigma_n,\boldsymbol{\sigma}_{\tau}$.
\end{proof}

To prove our main result we need the following lemma.

\begin{lemma} \label{lemma-H1}
Let $c >0$ be a constant and let $\Omega \subset \R^n$, $n > 2$, be a bounded open set. 
Suppose $\boldsymbol{f} \in H^1(\Omega; \mathbb{R}^m)$ with value in $\mathbb{R}^m$, $m\geq 1$.
Let $\boldsymbol{\rho}: \R^m \to \R^m$ be defined by
    \begin{align} \label{def-rho-vector}
\boldsymbol{\rho}(\xi) = \frac{\xi}{\max(|\xi|,c)}, \quad \xi=(\xi_1,\cdots,\xi_m)\in \R^m.
    \end{align}
Denote by $T_{\boldsymbol{\rho}} \boldsymbol{f}=\boldsymbol{\rho} \circ \boldsymbol{f}$ the composition operator.    
Then $T_{\boldsymbol{\rho}}$ is a continuous operator from $H^1(\Omega;\R^m)$ to $H^1(\Omega;\R^m)$, and
$$\|T_{\boldsymbol{\rho}} \boldsymbol{f}\|_{H^1(\Omega;\R^m)}\leq C(m,c,\Omega)\Big(1+ \|\boldsymbol{f}\|_{H^1(\Omega;\R^m)} \Big).$$
\end{lemma}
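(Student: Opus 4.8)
The statement is that the nonlinear truncation map $T_{\boldsymbol{\rho}}$ preserves $H^1$ with a linear bound. The plan is to reduce to the scalar case analysis of $\boldsymbol{\rho}$ itself, verify that $\boldsymbol{\rho}$ is globally Lipschitz, apply the chain rule for Sobolev functions composed with Lipschitz maps, and then estimate the two pieces of the resulting $H^1$ norm separately.

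\textbf{Step 1: $L^2$ bound.} Since $|\boldsymbol{\rho}(\xi)| = |\xi|/\max(|\xi|,c) \leq 1$ for all $\xi \in \R^m$, we immediately get $\|T_{\boldsymbol{\rho}}\boldsymbol{f}\|_{L^2(\Omega;\R^m)} \leq |\Omega|^{1/2} =: C_1(m,\Omega)$, a constant independent of $\boldsymbol{f}$.

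\textbf{Step 2: $\boldsymbol{\rho}$ is globally Lipschitz.} I would show $\boldsymbol{\rho} \in C^{0,1}(\R^m;\R^m)$ with Lipschitz constant depending only on $c$ (in fact one can take it to be $\leq 2/c$ or similar). On the ball $\{|\xi| \leq c\}$, $\boldsymbol{\rho}(\xi) = \xi/c$, which is linear with Lipschitz constant $1/c$. On the region $\{|\xi| > c\}$, $\boldsymbol{\rho}(\xi) = \xi/|\xi|$ is smooth with $D\boldsymbol{\rho}(\xi) = |\xi|^{-1}(I - \xi\otimes\xi/|\xi|^2)$, whose operator norm is $\leq |\xi|^{-1} < 1/c$. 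The two definitions agree on $\{|\xi| = c\}$, so $\boldsymbol{\rho}$ is continuous across the interface and, being Lipschitz on each closed piece with a common bound, is globally Lipschitz on $\R^m$ with constant $L \leq 1/c$ (a short connecting-segment argument handles pairs of points in different regions). In particular $\boldsymbol{\rho}$ is differentiable a.e.\ with $\|D\boldsymbol{\rho}\|_{L^\infty} \leq L$.

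\textbf{Step 3: chain rule and gradient bound.} Since $\boldsymbol{\rho}$ is Lipschitz and $\boldsymbol{f} \in H^1(\Omega;\R^m)$, the composition $\boldsymbol{\rho}\circ\boldsymbol{f} \in H^1(\Omega;\R^m)$ and the chain rule $\nabla(\boldsymbol{\rho}\circ\boldsymbol{f}) = (D\boldsymbol{\rho})(\boldsymbol{f})\,\nabla\boldsymbol{f}$ holds a.e.\ (this is the standard Sobolev chain rule for Lipschitz outer functions; see e.g.\ Evans or Ziemer, and note $n>2$ is not actually needed for this — it is presumably inherited from the ambient hypotheses). Hence pointwise a.e.\ $|\nabla(\boldsymbol{\rho}\circ\boldsymbol{f})| \leq L\,|\nabla\boldsymbol{f}|$, so $\|\nabla T_{\boldsymbol{\rho}}\boldsymbol{f}\|_{L^2(\Omega)} \leq L\,\|\nabla\boldsymbol{f}\|_{L^2(\Omega)} \leq c^{-1}\|\boldsymbol{f}\|_{H^1(\Omega;\R^m)}$.

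\textbf{Step 4: conclude.} Combining Steps 1 and 3,
\[
\|T_{\boldsymbol{\rho}}\boldsymbol{f}\|_{H^1(\Omega;\R^m)} \leq \|T_{\boldsymbol{\rho}}\boldsymbol{f}\|_{L^2} + \|\nabla T_{\boldsymbol{\rho}}\boldsymbol{f}\|_{L^2} \leq |\Omega|^{1/2} + c^{-1}\|\boldsymbol{f}\|_{H^1(\Omega;\R^m)} \leq C(m,c,\Omega)\big(1 + \|\boldsymbol{f}\|_{H^1(\Omega;\R^m)}\big).
\]
Continuity of $T_{\boldsymbol{\rho}}: H^1 \to H^1$ follows from the same ingredients: for a sequence $\boldsymbol{f}_j \to \boldsymbol{f}$ in $H^1$, passing to a subsequence converging a.e., dominated convergence gives $T_{\boldsymbol{\rho}}\boldsymbol{f}_j \to T_{\boldsymbol{\rho}}\boldsymbol{f}$ in $L^2$ (using $|\boldsymbol{\rho}|\leq 1$), while the gradients converge because $(D\boldsymbol{\rho})(\boldsymbol{f}_j) \to (D\boldsymbol{\rho})(\boldsymbol{f})$ a.e.\ at points where $\boldsymbol{\rho}$ is differentiable and $\nabla \boldsymbol{f}_j \to \nabla\boldsymbol{f}$ in $L^2$ — a standard argument, with care taken on the set $\{|\boldsymbol{f}| = c\}$ where $\nabla(\boldsymbol{\rho}\circ\boldsymbol{f}) = 0$ a.e.\ anyway.

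\textbf{Main obstacle.} None of the steps is deep; the only point requiring genuine care is Step 3, namely justifying the chain rule for $\nabla(\boldsymbol{\rho}\circ\boldsymbol{f})$ when $\boldsymbol{\rho}$ is merely Lipschitz (not $C^1$) across the sphere $\{|\xi|=c\}$ and $\boldsymbol{f}$ is only $H^1$. This is handled by the standard fact that $\{|\boldsymbol{f}|=c\}$ has $\nabla\boldsymbol{f}=0$ a.e.\ on the level set, so the non-smoothness of $\boldsymbol{\rho}$ there does not contribute, and the chain rule formula with any Borel representative of $D\boldsymbol{\rho}$ is valid. One should also keep the constant's dependence transparent: it enters only through $|\Omega|$ and through $c^{-1}$, matching the claimed form $C(m,c,\Omega)$ (the dependence on $m$ is harmless and could even be dropped).
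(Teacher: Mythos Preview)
Your Steps~1--4 establishing the norm bound are correct and essentially parallel the paper's argument, which cites Marcus--Mizel \cite{MM} in place of writing out the chain rule; your Lipschitz computation in Step~2 is in fact tidier than the paper's.

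The genuine gap is in your treatment of \emph{continuity}. You assert, both in Step~4 and in the ``Main obstacle'' paragraph, that $\nabla\boldsymbol{f}=0$ (or $\nabla(\boldsymbol{\rho}\circ\boldsymbol{f})=0$) a.e.\ on the set $\{|\boldsymbol{f}|=c\}$, calling this a ``standard fact''. This is the scalar-case intuition and it is \emph{false} for $m\geq 2$: the set $\{|\boldsymbol{f}|=c\}$ is the preimage of a sphere, not of a point. Take for instance $\boldsymbol{f}(x)=c(\cos x_1,\sin x_1,0,\dots,0)$; then $|\boldsymbol{f}|\equiv c$ on all of $\Omega$, yet $\nabla\boldsymbol{f}\neq 0$ and $\nabla(\boldsymbol{\rho}\circ\boldsymbol{f})=c^{-1}\nabla\boldsymbol{f}\neq 0$. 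Your dominated-convergence step for $(D\boldsymbol{\rho})(\boldsymbol{f}_j)\nabla\boldsymbol{f}\to (D\boldsymbol{\rho})(\boldsymbol{f})\nabla\boldsymbol{f}$ therefore has no justification on this set, where $D\boldsymbol{\rho}$ genuinely jumps.

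This is not a cosmetic issue: continuity of the Nemitsky operator $T_{\boldsymbol{\rho}}:H^1\to H^1$ for Lipschitz $\boldsymbol{\rho}$ \emph{fails in general}, as the paper explicitly warns (citing Musina \cite{M}). The paper's remedy is structural: write $\boldsymbol{h}(\xi):=\boldsymbol{\rho}(\xi)-\xi/c$, observe that $\boldsymbol{h}\equiv 0$ on the closed ball $\{|\xi|\leq c\}$, and verify the criterion of \cite[Example~3.1]{M}, namely that for every $\xi_0$ on the sphere and every tangent vector $\tau\in T_{\partial S}(\xi_0)$ one has $\lim_{\xi\to\xi_0,\,|\xi|\neq c}\nabla\boldsymbol{h}(\xi)\cdot\tau=0$. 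This holds precisely because the smooth extension $\widetilde{\boldsymbol{h}}(\xi)=\xi/|\xi|-\xi/c$ vanishes identically on the sphere. It is this specific feature of $\boldsymbol{\rho}$ --- that its nonlinear part is tangentially flat along the singular set --- that rescues continuity, and it is not captured by the Lipschitz bound alone.
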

\begin{proof}
Let $j = 1, \cdots, m$.
To show $T_{\boldsymbol{\rho}} \boldsymbol{f}\in H^1(\Omega; \mathbb{R}^m)$, it is enough to show that $\rho_j \circ \boldsymbol{f} \in H^1(\Omega)$ for all $j$, where the function $\rho_j: \R^m \to \R$ is the component of $\boldsymbol{\rho}$, namely,
    \begin{align} \label{def-rho}
\rho_j(\xi) = \frac{\xi_j}{\max(|\xi|,c)}, \quad \xi=(\xi_1,\cdots,\xi_m)\in \R^m.
    \end{align}
According to \cite[Theorem 1]{MM}, this follows after showing that $\rho_j$ is locally Lipschitz and that there is a uniform constant $C > 0$ such that 
    \begin{align}
|\p_{\xi_k} \rho_j(\xi)| \leq C, \quad k = 1,\cdots,m,
    \end{align}
almost everywhere in $\R^m$, for any $j=1,\cdots,m$.

For $|\xi|\neq c$, the function $\rho_j$ is smooth and it is straightforward to check that 
$$|\p_{\xi_k} \rho_j | \leq \frac 2 c, \quad j,k=1,\cdots,m.$$
Hence it suffices to verify that $\rho_j$ is Lipschitz across $|\xi| = c$.
Let $|\xi| \leq c$ and $|\eta| \geq c$. Then
    \begin{align*}
\rho_j(\xi) - \rho_j(\eta) = \frac{\xi_j}c - \frac{\eta_j}{|\eta|},
    \end{align*}
and
    \begin{align*}
|\rho_j(\xi) - \rho_j(\eta)| 
&\leq \left| \frac{\xi_j}{c}-\frac{\xi_j}{|\eta|}\right| + \left| \frac{\xi_j}{|\eta|}-\frac{\eta_j}{|\eta|} \right| 
\leq \frac1c \big| |\eta|-c| \big| + \frac1c |\xi_j-\eta_j| \\
&\leq \frac1c \big| |\eta|-|\xi| \big| + \frac1c |\xi_j-\eta_j|
\leq \frac2c |\xi-\eta|.
    \end{align*}
This shows that $\rho_j$ is Lipschitz on $\mathbb{R}^m$ with uniform Lipschitz constant $2/c$, for all $j=1,\cdots,m$.    
Moreover, \cite[Theorem 1]{MM} gives an estimate for the norm $\|\rho_j \circ \boldsymbol{f}\|_{H^1(\Omega)}$,
$$\|\rho_j\circ \boldsymbol{f}\|_{H^1(\Omega)} \leq C(m,c,\Omega)\Big(1+ \|\boldsymbol{f}\|_{H^1(\Omega;\R^m)} \Big),\quad j=1,\cdots,m.$$

For the continuity of $T_{\boldsymbol{\rho}}$, it is necessary to use the specific form of the function $\boldsymbol{\rho}$ in \eqref{def-rho-vector}, as the continuity does not hold for all composition operators in general, see e.g. \cite[Section 1]{M}.
In our case, for $\xi=(\xi_1,\cdots,\xi_m)\in \R^m$, we define
\begin{equation}
\boldsymbol{h}(\xi):=\boldsymbol{\rho}(\xi)-\frac{\xi}{c} = \left\{ \begin{aligned} & \frac{\xi}{|\xi|}-\frac{\xi}{c} ,\quad &|\xi|\geq c, \\
&0,\quad &|\xi|<c.
\end{aligned} \right.
\end{equation}
It is clear that $\boldsymbol{h}$ is uniformly Lipschitz on $\mathbb{R}^m$ as $\boldsymbol{\rho}$ is uniformly Lipschitz on $\R^m$.
We apply \cite[Example 3.1]{M} to the Lipschitz function $\boldsymbol{h}$ with $S=\{\xi\in \R^m: |\xi|\leq c\}$, the closed ball of radius $c$. Since $\boldsymbol{h}$ is identically $0$ in $S$ and $\boldsymbol{h}$ is smooth in the complement of $S$, it is enough to verify that, for every $\xi_0\in \partial S$, 
\begin{equation} \label{verify-condition}
\lim_{\substack{\xi \to \xi_0 \\ \xi\notin \partial S}} \nabla \boldsymbol{h}(\xi) \cdot \tau=0,\quad \textrm{ for all }\tau\in T_{\partial S}(\xi_0),
\end{equation}
where $T_{\partial S}(\xi_0)$ denotes the tangent space of $\partial S=\{\xi\in \R^m: |\xi|=c\}$ at $\xi_0\in \partial S$. For $\xi\in S\setminus \partial S$, we see $\nabla \boldsymbol{h}(\xi)=0$ since $\boldsymbol{h}$ is identically $0$ in $S$. On the complement $\R^m\setminus S$, since $\boldsymbol{h}|_{\R^m \setminus S}$ can be smoothly extended to a function $\widetilde{\boldsymbol{h}}(\xi):=\xi/|\xi|-\xi/c$ on a neighborhood of $\partial S$, we have
$$\lim_{\substack{\xi\to \xi_0 \\ \xi\notin S}} \nabla \boldsymbol{h}(\xi) \cdot \tau=\nabla \widetilde{\boldsymbol{h}}(\xi_0)\cdot \tau=0,\quad \textrm{ for all }\tau\in T_{\partial S}(\xi_0).$$
In the above, the fact that $\nabla \widetilde{\boldsymbol{h}}(\xi_0)\cdot \tau=0$ is due to the definition of tangent vector, namely, by differentiating $(\widetilde{h}_j \circ \gamma)(s)$ at $s=0$, where $\widetilde{h}_j$, $j=1,\cdots,m$, are the components of $\widetilde{\boldsymbol{h}}$ and $\gamma(s)$ is a smooth curve in $\partial S$ with the initial vector $\gamma'(0)=\tau\in T_{\partial S}(\xi_0)$. Then the claim immediately follows since $\widetilde{h}_j \circ \gamma=0$ due to $\widetilde{\boldsymbol{h}}=0$ on $\partial S$.
This verifies \eqref{verify-condition}, and then \cite[Example 3.1]{M} gives the continuity of $T_{\boldsymbol{h}}$ and thus the continuity of $T_{\boldsymbol{\rho}}$.
\end{proof}

Now we prove Theorem \ref{main}.

\begin{proof}[Proof of Theorem \ref{main}]
Since $\big| \jmp{\p_t \veu_{\tau}}\big| \geq c_1>0$ on $\Sigma_{{\rm f}}\times [-T,T]$, i.e., the fault is slipping everywhere, it follows from \eqref{apriori-fault} that
\begin{equation} \label{eq-friction-Lip}
g=|\boldsymbol{\sigma}_{\tau}| =\mathscr{F}|F_n| \in C^{0,1}(\Sigma_{{\rm f}}\times [-T,T]), \quad \; \|g\|_{C^{0,1}(\Sigma_{{\rm f}}\times [-T,T])}\leq C_0^2.
\end{equation}
On the other hand, the friction condition \eqref{Tresca} implies that
\begin{equation} \label{friction-direction}
\boldsymbol{\sigma}_{\tau}=g \, \frac{\jmp{\p_t \veu_{\tau}}}{\big| \jmp{\p_t \veu_{\tau}} \big|}.
\end{equation}
Denote
\begin{equation}
\veu_+:= \veu |_{N},\quad \veu_-:=\veu|_{\textrm{int}(D)}, 
\end{equation}
where $N,D$ are the manifolds with smooth boundary constructed in the proof of Lemma \ref{lemma-trace}, see Figure \ref{fig_N}.
Let us denote
\begin{equation}
E(N):=W^{1,\infty}(-T,T;H^1(N))\cap W^{2,\infty}(-T,T;L^2(N)).
\end{equation}
As $\veu_+ \in E(N)$ by \eqref{regularity-direct}, we can extend $\veu_+$ to a small neighborhood $\widetilde{N}$ of $N$ in $M$ so that the extension of $\veu_+$ is in the same regularity class $E(\widetilde{N})$.
Similarly, we can extend $\veu_-$ to a small neighborhood $\widetilde{D}$ of $D$ in $M$ so that the extension of $\veu_-$ is in the same regularity class $E(\widetilde{D})$.
Let $V$ be a small open neighborhood of $\Sigma_{{\rm f}}$ in $M$.
As $\Sigma_{{\rm f}}\subset \partial D$, in this way, the functions $\veu_+,\veu_-$ are both extended to $V$ and the extensions are in $E(V)$.
Then $\partial_t \veu_+,\partial_t \veu_- \in H^1(V\times [-T,T])$, and thus the tangential component of the difference
\begin{equation}
\boldsymbol{f}:=\partial_t(\veu_+ -\veu_-)_{\tau} \in H^1(V\times [-T,T]).
\end{equation}
Recall the definition of tangential component near $\Sigma_{{\rm f}}$ in \eqref{tangential-jump} where the unit normal vector field $\boldsymbol{n}$ is smoothly extended to a neighborhood of $\Sigma_{{\rm f}}$.
By Lemma \ref{lemma-H1}, the function $\boldsymbol{f}/\max(| \boldsymbol{f}|,c_1) \in H^1(V\times [-T,T])$. Hence, as $V$ is an open neighborhood of $\Sigma_{{\rm f}}$ in $M$, the trace of $\boldsymbol{f}/\max(|\boldsymbol{f}|,c_1)$ onto $\Sigma_{{\rm f}}$ is in $H^{1/2}(\Sigma_{{\rm f}}\times [-T,T])$. 

\smallskip
Denote the composition operator $T_{\boldsymbol{\rho}} \boldsymbol{f}:=\boldsymbol{\rho}\circ \boldsymbol{f}$, where $\boldsymbol{\rho}:\mathbb{R}^3\to \R^3$ is the Lipschitz function with uniform Lipschitz constant $4/c_1$ defined in \eqref{def-rho-vector}, taking $c=c_1$. In the above we have shown that $\textrm{tr}(T_{\boldsymbol{\rho}} \boldsymbol{f}) \in H^{1/2} (\Sigma_{{\rm f}}\times [-T,T])$, where $\textrm{tr}$ stands for the trace operator onto $\Sigma_{{\rm f}}$. Now we show that $\textrm{tr}(T_{\boldsymbol{\rho}} \boldsymbol{f})=T_{\boldsymbol{\rho}} (\textrm{tr}\boldsymbol{f})$ in $L^2(\Sigma_{{\rm f}}\times [-T,T])$, which would imply that $T_{\boldsymbol{\rho}} (\textrm{tr}\boldsymbol{f}) \in H^{1/2} (\Sigma_{{\rm f}}\times [-T,T])$.
Let $\boldsymbol{f}_k\in C^{\infty}(\overline{V}\times [-T,T])$ be a sequence of smooth vector-valued functions such that $\boldsymbol{f}_k\to \boldsymbol{f}$ as $k\to \infty$ in $H^1(V\times [-T,T])$.
Then as $k\to \infty$,
\begin{eqnarray*}
\| T_{\verho} (\textrm{tr}\boldsymbol{f})-T_{\verho}(\textrm{tr} \boldsymbol{f}_k) \|_{L^2(\Sigma_{{\rm f}}\times [-T,T])} &\leq& \frac{4}{c_1} \|\textrm{tr}\boldsymbol{f}-\textrm{tr} \boldsymbol{f}_k\|_{L^2(\Sigma_{{\rm f}}\times [-T,T])} \\
&\leq& \frac{4}{c_1} C\|\boldsymbol{f}-\boldsymbol{f}_k\|_{H^1(V\times [-T,T])} \to 0,
\end{eqnarray*}
and by the continuity of the composition operator $T_{\verho}$ from Lemma \ref{lemma-H1},
\begin{equation*}
\| \textrm{tr}(T_{\verho}\boldsymbol{f}) -\textrm{tr} (T_{\verho}\boldsymbol{f}_k) \|_{L^2(\Sigma_{{\rm f}}\times [-T,T])} \leq C \| T_{\verho}\boldsymbol{f} -T_{\verho}\boldsymbol{f}_k \|_{H^1(V\times [-T,T])} \to 0.
\end{equation*}
Since $T_{\verho}(\textrm{tr} \boldsymbol{f}_k)=\textrm{tr} (T_{\verho}\boldsymbol{f}_k)$ for smooth functions, we see that $T_{\verho}(\textrm{tr} \boldsymbol{f})=\textrm{tr} (T_{\verho}\boldsymbol{f})$ in $L^2(\Sigma_{{\rm f}}\times [-T,T])$.
As $\textrm{tr}\boldsymbol{f}=\jmp{\partial_t \veu_{\tau}}$ by definition, we conclude that $T_{\verho}(\jmp{\partial_t \veu_{\tau}})\in H^{1/2} (\Sigma_{{\rm f}}\times [-T,T])$.
Thus, due to the condition $\big| \jmp{\p_t \veu_{\tau}}\big| \geq c_1>0$ on $\Sigma_{{\rm f}}\times [-T,T]$, we have
\begin{equation} \label{H1-direction}
\frac{\jmp{\p_t \veu_{\tau}}}{\big| \jmp{\p_t \veu_{\tau}} \big|} \in H^{\frac12}(\Sigma_{{\rm f}}\times [-T,T]).
\end{equation}

Using the equation \eqref{friction-direction}, \eqref{eq-friction-Lip} and \eqref{H1-direction} imply that $\boldsymbol{\sigma}_{\tau}\in H^{1/2}(\Sigma_{{\rm f}}\times [-T,T])$.
Namely, denote by $A_g$ the multiplication operation by $g$.
Since $g$ is Lipschitz by \eqref{eq-friction-Lip}, the operator
$A_g:L^2(\Sigma_{{\rm f}}\times [-T,T])\to L^2(\Sigma_{{\rm f}}\times [-T,T])$ and $A_g:H^1(\Sigma_{{\rm f}}\times [-T,T])\to H^1(\Sigma_{{\rm f}}\times [-T,T])$ are bounded. 
Hence
$A_g:H^s(\Sigma_{{\rm f}}\times [-T,T])\to H^s(\Sigma_{{\rm f}}\times [-T,T])$
is a bounded operator for any $s\in (0,1)$ by \cite[Theorem 5.1]{LM}.
Thus, by \eqref{friction-direction} and \eqref{H1-direction},
\begin{equation} \label{stress-H1/2}
\boldsymbol{\sigma}_{\tau}\in H^{\frac12}(\Sigma_{{\rm f}}\times [-T,T]),
\end{equation}
with an estimate of the norm depending on $\Lambda_0,C_0,c_1$ and $\Sigma_{{\rm f}}$.

\smallskip
Suppose that we have two systems with friction coefficients $\mathscr{F}_1, \mathscr{F}_2$ at $\Sigma_{{\rm f}}$,  and we are given elastic waves $\veu_1,\veu_2$ that are close in the sense that
$$\|\veu_1-\veu_2\|_{H^2(U\times [-T,T])}\leq \varepsilon_0.$$
Denote 
by $\sigma_n^{(j)}, \boldsymbol{\sigma}_{\tau}^{(j)}$ $(j=1,2)$ the components of the stress tensors corresponding to the two systems.
Since the stress tensor $\boldsymbol{\sigma}(\veu)$ is linear in $\veu$, applying Lemma \ref{lemma-trace} to $\veu_1-\veu_2$ gives
$$\Big\| \boldsymbol{\sigma}_{\tau}^{(1)}-\boldsymbol{\sigma}_{\tau}^{(2)}\Big\|_{H^{-\frac12-\alpha}(\Sigma_{{\rm f}}\times [-\frac{T}{2},\frac{T}{2}])} \leq C\Lambda_0^{1-\alpha}\varepsilon_1^{\alpha},$$ 
$$\Big\| \sigma_{n}^{(1)}-\sigma_{n}^{(2)}\Big\|_{H^{-\frac12-\alpha}(\Sigma_{{\rm f}}\times [-\frac{T}{2},\frac{T}{2}])} \leq C\Lambda_0^{1-\alpha}\varepsilon_1^{\alpha}.$$
Then picking $\alpha=1/4$, by interpolation with $\boldsymbol{\sigma}_{\tau}\in H^{1/2}$ by \eqref{stress-H1/2} and $\sigma_n=F_n \in C^{0,1}\subset H^1$ by \eqref{apriori-fault}, we have
\begin{eqnarray*}
\Big\| \boldsymbol{\sigma}_{\tau}^{(1)}-\boldsymbol{\sigma}_{\tau}^{(2)} \Big\|_{L^2(\Sigma_{{\rm f}}\times [-\frac{T}{2},\frac{T}{2}])} &\leq& C(C_0,\Lambda_0,T,c_1)\varepsilon_1^{\frac{1}{10}}, \\ \Big\| \sigma_{n}^{(1)}-\sigma_{n}^{(2)} \Big\|_{L^2(\Sigma_{{\rm f}}\times [-\frac{T}{2},\frac{T}{2}])} &\leq& C(C_0,\Lambda_0,T)\varepsilon_1^{\frac{1}{7}}.
\end{eqnarray*}
Hence,
\begin{eqnarray*}
\|\mathscr{F}_1-\mathscr{F}_2\|_{L^2(\Sigma_{{\rm f}}\times [-\frac{T}{2},\frac{T}{2}])} &=& \Big\| \frac{|\boldsymbol{\sigma}_{\tau}^{(1)}|}{|\sigma_{n}^{(1)}|} -\frac{|\boldsymbol{\sigma}_{\tau}^{(2)}|}{|\sigma_{n}^{(2)}|} \Big\|_{L^2(\Sigma_{{\rm f}}\times [-\frac{T}{2},\frac{T}{2}])} \\
&\leq& c_0^{-2} \Big\|   \Big(|\boldsymbol{\sigma}_{\tau}^{(1)}|-|\boldsymbol{\sigma}_{\tau}^{(2)}| \Big) |\sigma_n^{(2)}| - \Big(|\sigma_{n}^{(1)}|-|\sigma_{n}^{(2)}| \Big) |\boldsymbol{\sigma}_{\tau}^{(2)}|  \Big\|_{L^2} \\
&\leq& c_0^{-2}C_0 \Big\| |\boldsymbol{\sigma}_{\tau}^{(1)}|-|\boldsymbol{\sigma}_{\tau}^{(2)}| \Big\|_{L^2} + c_0^{-2}C_0^2 \Big\| |\sigma_{n}^{(1)}|-|\sigma_{n}^{(2)}| \Big\|_{L^2} \\
&\leq& c_0^{-2}C_0 \Big\| \boldsymbol{\sigma}_{\tau}^{(1)}-\boldsymbol{\sigma}_{\tau}^{(2)}\Big\|_{L^2(\Sigma_{{\rm f}}\times [-\frac{T}{2},\frac{T}{2}])} + c_0^{-2}C_0^2 \Big\| \sigma_{n}^{(1)}-\sigma_{n}^{(2)} \Big\|_{L^2(\Sigma_{{\rm f}}\times [-\frac{T}{2},\frac{T}{2}])} \\
&\leq& C(c_0,C_0,\Lambda_0,T,c_1) \varepsilon_1^{\frac{1}{10}},
\end{eqnarray*}
which proves the stability part (2).
The uniqueness part (1) is a consequence of the stability when $\varepsilon_0 \to 0$.
\end{proof}

\bigskip

\end{document}